\documentclass[12pt]{amsart}
\usepackage{amssymb,hyperref, comment}
\usepackage[all]{xy}

% Revised in March 2011

\textwidth=160mm
\textheight=200mm
\topmargin=20mm
\hoffset=-20mm

\newtheorem{theorem}{Theorem}[section]
\newtheorem{definition}[theorem]{Definition}

\newtheorem{proposition}[theorem]{Proposition}

\newtheorem{lemma}[theorem]{Lemma}

\newtheorem{remark}[theorem]{Remark}

\begin{document}

\title[Quantum isometry groups]{Quantum isometry groups of duals of free powers of cyclic groups}

\author{Teodor Banica}
\address{T.B.: Department of Mathematics, Cergy-Pontoise University, 95000 Cergy-Pontoise, France. {\tt teodor.banica@u-cergy.fr}}

\author{Adam Skalski}
\address{A.S.: Institute of Mathematics of the Polish Academy of Sciences, ul. \'Sniadeckich 8, 00-956 Warszawa, Poland. {\tt a.skalski@impan.pl}}

\subjclass[2000]{46L65 (16W30, 46L54, 58J42)}
\keywords{Spectral triple, Quantum isometry, Noncrossing partition}

\begin{abstract}
We study the quantum isometry groups $G^+(\widehat{\Gamma})$ of the noncommutative Riemannian manifolds associated
to discrete group duals $\widehat{\Gamma}$. The basic representation theory problem is to compute the law of the
main character $\chi:G^+(\widehat{\Gamma})\to\mathbb C$, and our main result here is as follows: for
$\Gamma=\mathbb Z_s^{*n}$, with $s\geq 5$ and $n\geq 2$, the variable $\chi/2$ follows the compound free Poisson
law $\pi_{\underline{\varepsilon}/2}$, where $\varepsilon$ is the uniform measure on the $s$-th roots of unity,
and $\varepsilon\to\underline{\varepsilon}$ is the canonical projection map from complex to real measures. We
discuss as well a number of technical versions of this result, notably with the construction of a new quantum
group, which appears as a ``representation-theoretic limit'', at $s=\infty$.
\end{abstract}

\maketitle

\section*{Introduction}

The notion of compact quantum group was introduced by Woronowicz in \cite{wo1}, \cite{wo2}. Woronowicz's
formalism, which is both quite general, and remarkably easy to handle, allowed Wang to construct in \cite{wa1},
\cite{wa2} a number of universal quantum groups, namely the free analogues of $O_n,U_n,S_n$. The next step,
developed by Bichon in \cite{bic} and by the first-named author in \cite{ban}, was the construction of quantum
automorphism groups of various discrete structures (finite graphs, finite metric spaces). This allowed a
considerable extension of Wang's original list of free quantum groups, notably with a free analogue of the
hyperoctahedral group $H_n$ \cite{bbc}, and a number of technical versions of it \cite{bb+}, \cite{ez1},
\cite{bsp}.

Another important generalization of Wang's original constructions comes from the work of Goswami \cite{gos}. The
idea is that Connes' noncommutative geometry theory \cite{con}, \cite{cma} has shown that a number of important
situations, mainly coming from particle physics or number theory, are best described by a certain spectral triple
$X=(A,H,D)$. It is therefore natural to ask for the computation of the quantum isometry group $G^+(X)$ of such a
spectral triple, constructed in \cite{gos}. The theory here has been quickly developed in the last few years,
first with a number of foundational papers by Bhowmick and Goswami \cite{bg1}, \cite{bg2}, \cite{bg3}. The earlier
quantum automorphism constructions in \cite{ban}, \cite{bic} were reformulated, unified and generalized in
\cite{bgs}, by using the discrete spectral triples constructed by Christensen and Ivan in \cite{civ}. Another
important computation, with several potential applications, is that of the quantum isometry group of Connes'
spectral triple of the standard model: this was recently done by Bhowmick, D'Andrea and Dabrowski \cite{bdd}.

One promising direction in view of the general understanding of the quantum isometry groups is that of the
explicit computation of $G^+(X)$, in the case where $X=\widehat{\Gamma}$ is a discrete group dual: indeed, a
number of very concrete tools, coming from combinatorics, subfactors, or free probability, are available here. The
general theory, as well as a number of examples, including an explicit presentation result in the free group case
$\Gamma=F_n$, were worked out in \cite{bhs}. This latter free group case was studied in much detail in \cite{bsk},
where a link with the hyperoctahedral quantum group in \cite{bbc}, with the Fuss-Catalan combinatorics of Bisch
and Jones \cite{bjo}, and with the free spheres constructed in \cite{bgo} was obtained. More generally, it was
shown in \cite{bsk} that the quantum isometry group $G^+(\widehat{F}_n)$ belongs in fact to a general class of
``two-parameter quantum symmetry groups'', and this makes a link with the recent classification program for the
easy quantum groups, started in \cite{ez1}, \cite{bsp}.

The purpose of the present paper is to push one step forward the previous combinatorial and probabilistic
considerations in \cite{bsk}, \cite{bhs}. These considerations basically concern the computation of the law of the
main character, and our main result here is as follows.

\medskip

\noindent {\bf Theorem.} {\em For $\Gamma=\mathbb Z_s^{*n}$ with $s\geq 5$ and $n\geq 2$, the half of the main character of $G^+(\widehat{\Gamma})$ follows the compound free Poisson law $\pi_{\underline{\varepsilon}/2}$, where $\varepsilon$ is the uniform measure on the $s$-roots of unity, and $\varepsilon\to\underline{\varepsilon}$ is the canonical projection map from complex to real measures.}

\medskip

The proof of this result uses the general diagrammatic methods in \cite{bb+}, \cite{bsk}, Voiculescu's
$R$-transform from \cite{voi}, \cite{vdn} and Speicher's notion of free cumulants from \cite{nsp}, \cite{sp1}.

As in the previous paper \cite{bb+}, we end up with a measure which is a compound free Poisson law, in the sense
of \cite{hpe}, \cite{sp2}. However, the situation here is more complicated, combinatorially speaking, because the
projection map $\varepsilon\to\underline{\varepsilon}$ appears.

The above theorem holds in fact as well at $s=3$. However, at $s=2,4$ the situation is quite different, and we
will present here some results in this direction. The main point here is that the case $s=4$ is truly special, as
already observed in \cite{ban}, \cite{bhs}.

The limiting case $s\rightarrow \infty$ is quite interesting as well, because the quantum groups in the above
theorem ``converge'', in a certain representation theory sense, to a compact quantum group which is different
from the quantum isometry group of the dual of the free group $F_n=\mathbb Z^{*n}$ studied in \cite{bsk}. The
construction of this new quantum group, for which we refer to section 6 below, will be actually our second main
result in this paper.

Finally, let us mention that there are many similarities with the ``easy quantum group'' results in \cite{bb+},
\cite{ez1}, \cite{ez2}. One can expect that some further generalizations of the above theorem might provide some
answers to the questions raised in \cite{ez1}. We do not know if it is so, but we will make some comments in this
direction, at the end of the paper.

The paper is organized as follows: 1 is a preliminary section, in 2 we discuss the general properties of
$G^+(\widehat{\mathbb Z_s^{*n}})$, and in 3-5 we compute diagrams, discuss the compound free Poisson laws, and
prove the above theorem. The final sections, 6-7, contain some technical versions of that theorem, and a few
concluding remarks.

\subsection*{Acknowledgements}

T.B. would like to thank P. Hajac and the IMPAN for the warm hospitality, during a visit in October 2010, when
part of this work was done. The work of T.B. was supported by the ANR grants ``Galoisint'' and ``Granma''. We also want to thank the referees for the comments that have led to an improvement of the paper.

\section{Quantum isometries}

The natural framework for the study of noncommutative objects like $\widehat{\Gamma}$ is Connes' noncommutative
geometry \cite{con}, \cite{cma}, where the basic definition is as follows.

\begin{definition}
A compact spectral triple $(A,H,D)$ consists of the following:
\begin{enumerate}
\item  a unital $C^*$-algebra $A$;

\item  a Hilbert space $H$, on which $A$ acts;

\item an unbounded self-adjoint operator $D$ on $H$, with compact resolvents, such that $[D,x]$ has a bounded
extension, for any $x$ in a dense $*$-subalgebra ${\mathcal A}\subset A$.
\end{enumerate}
\end{definition}

This definition is of course over-simplified, as to best fit with the purposes of the present paper. We refer to
\cite{con}, \cite{cma} for the precise formulation of the axioms.

Now let $\Gamma=<g_1,\ldots,g_m>$ be a discrete group, with the generating set $S=\{g_1,\ldots,g_m\}$ assumed to
satisfy $S=S^{-1}$ and $1\notin S$. The generating set $S$ is of course taken to be a set without repetitions. In
most of the examples below, $m$ will be actually minimal.

The length on $\Gamma$ is given by $l(g)=\min\{r\in\mathbb N|\exists\,h_1\ldots h_r\in S,g=h_1\ldots h_r\}$, and
the distance on $\Gamma$ is given by $d(g,h)=l(g^{-1}h)$. Observe that $S=\{g\in G|l(g)=1\}$.

Consider the group algebra $\mathcal A=\mathbb C\Gamma$, with involution given by $g^*=g^{-1}$ and antilinearity,
and let $A=C^*(\Gamma)$ be the completion of $\mathcal A$ with respect to the maximal $C^*$-norm.

\begin{definition}
Associated to any discrete group $\Gamma=<g_1,\ldots,g_m>$ as above is a compact spectral triple
$\widehat{\Gamma}=(A,H,D)$, as follows:
\begin{enumerate}
\item $A=C^*(\Gamma)$ is the full group algebra of $\Gamma$.

\item $H=l^2(\Gamma)$, with $A$ acting on it by $g\delta_h=\delta_{gh}$.

\item $D(\delta_g)=l(g)\delta_g$, on the standard basis of $H$.
\end{enumerate}
\end{definition}

We can think of $\widehat{\Gamma}$ as being a ``noncommutative Riemannian manifold'', and then consider the
quantum isometry group of orientation preserving isometries $G^+(\widehat{\Gamma})$, introduced by Bhowmick and Goswami in Section 2 of \cite{bg2}.

We recall that the standard trace of $\mathbb C\Gamma\subset C^*(\Gamma)$ is given by $tr(g)=\delta_{g1}$ and
also identify in a natural way $\mathbb C\Gamma$ with a vector subspace of $l^2(\Gamma)$.

\begin{proposition}
$G^+(\widehat{\Gamma})$ is the universal compact quantum group acting on $C^*(\Gamma)$, such that  all the
eigenspaces of $D$  and the standard trace on $\mathbb C \Gamma$  are invariant under this action.
\end{proposition}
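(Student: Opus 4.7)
The plan is to derive the claimed universal property directly from the general definition of the quantum isometry group of a spectral triple given by Bhowmick and Goswami in \cite{bg2}, Section 2, specialized to the discrete-group-dual situation. Recall that, by that definition, $G^+(X)$ for a spectral triple $X=(A,H,D)$ is the universal compact quantum group $G$ carrying a unitary corepresentation $U$ on $H$ such that (a) $U$ commutes with $D$, i.e.\ preserves each eigenspace of $D$, and (b) $\alpha_U(x):=U(x\otimes 1)U^*$ restricts to a coaction of $G$ on $A$ which preserves a fixed reference trace canonically associated with the spectral data.

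I would begin by identifying these two pieces of structure in the case $X=\widehat{\Gamma}$. Since $D\delta_g=l(g)\delta_g$ on the orthonormal basis $(\delta_g)_{g\in\Gamma}$, the eigenspaces of $D$ are the finite-dimensional spaces $V_k=\mathrm{span}\{\delta_g:l(g)=k\}$, which under the identification $\mathbb C\Gamma\hookrightarrow l^2(\Gamma)\subset C^*(\Gamma)$ correspond precisely to the linear spans of the group elements of length $k$. In parallel, the canonical trace attached to $\widehat{\Gamma}$ by the Bhowmick--Goswami construction is exactly the standard trace $tr$ on $\mathbb C\Gamma$ described in the excerpt (as $\delta_1$ is the unique $D$-ground vector up to scalar).

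For the forward implication I would take any $G$ with an orientation-preserving isometric action on $\widehat{\Gamma}$ in the sense of \cite{bg2} and restrict the induced coaction $\alpha_U$ to $\mathbb C\Gamma$: condition (a) gives $\alpha_U(V_k)\subset V_k\otimes C(G)$ for all $k$, and (b) translates as invariance of $tr$, so the two conditions of the proposition hold. For the converse, given an action $\alpha:C^*(\Gamma)\to C^*(\Gamma)\otimes C(G)$ preserving every $V_k$ and $tr$, I would use the GNS construction for $tr$ to define $U\delta_g\otimes 1:=\alpha(g)$ on $l^2(\Gamma)$; trace preservation is exactly what makes $U$ isometric on $\mathbb C\Gamma$ and hence extendable to a unitary corepresentation on $H=l^2(\Gamma)$, while the grading-preservation condition translates into $(D\otimes 1)U=UD$. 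One checks by construction that $\alpha$ is recovered as $x\mapsto U(x\otimes 1)U^*$, so this builds an orientation-preserving isometric action in the sense of \cite{bg2}.

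Since the two constructions are mutually inverse and both universal properties single out the same object up to isomorphism, the two characterizations of $G^+(\widehat{\Gamma})$ agree. The main technical subtlety I anticipate is in the converse step: one must verify that the unitary $U$ produced via GNS not only intertwines $D$ correctly but also implements $\alpha$ by conjugation on all of $C^*(\Gamma)$, rather than merely on the dense subalgebra $\mathbb C\Gamma$ where it was defined. This should follow by a standard density/continuity argument once one notes that $\alpha$-stability of $\mathbb C\Gamma$ and unitarity of $U$ together force the identity $\alpha(x)=U(x\otimes 1)U^*$ to pass to the $C^*$-completion.
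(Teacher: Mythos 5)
Your proposal is correct and takes essentially the same route as the paper: the paper's proof just invokes the simplified characterization of the orientation-preserving quantum isometry group from Corollary 2.27 of \cite{bg2} (applicable since $\delta_1$ spans $\ker D$ and is cyclic and separating) together with the detailed verification for group duals in Section 2 of \cite{bhs}, and what you write out -- identifying the eigenspaces of $D$ with the spans of words of fixed length, identifying the canonical state with the standard trace via the vector $\delta_1$, and passing back and forth between trace- and eigenspace-preserving actions on $C^*(\Gamma)$ and $D$-commuting unitary corepresentations on $l^2(\Gamma)$ by a GNS-type construction -- is precisely the content delegated to those references. So the only difference is that you unfold in-line the details the paper cites.
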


\begin{proof}
This is more or less a consequence of a simpler reformulation of the general definition of the quantum isometry groups of orientation preserving isometries given in \cite{bg2}, in the case where the representation of $A$ on $H$ allows a cyclic and separating vector which spans the kernel of $D$ and satisfies natural conditions
with respect to the `smooth algebra' $\mathcal{A}$ (Corollary 2.27 of \cite{bg2}).
The conditions above are satisfied in the
case of manifolds coming from group duals; this, together with the consequences for the computations of $G^+(\widehat{\Gamma})$, is discussed in detail  in Section 2 of \cite{bhs}. Note that the fact that the eigenspaces of $D$ are preserved by the action $\alpha$
implies that $\alpha:\mathbb C \Gamma \to \mathbb C \Gamma  \otimes_{alg} C(G^+(\widehat{\Gamma}))$, so that the
trace preservation condition makes sense.
\end{proof}

Let us look now at the eigenspaces of $D$. There is one such eigenspace for each positive integer $r\in\mathbb N$,
namely the span of the set $\Gamma_r=\{g\in\Gamma|l(g)=r\}$ of words of length $r$.

\begin{proposition}
For any $r\in\mathbb N$, $G^+(\widehat{\Gamma})$ has a unitary representation on $span(\Gamma_r)$, the span of
words of $\Gamma$ of length $r$. Moreover, the representation at $r=1$ is faithful.
\end{proposition}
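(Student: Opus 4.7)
The plan is to deduce both statements directly from the characterization of $G^+(\widehat{\Gamma})$ in Proposition 1.3, together with the fact that $S=\Gamma_1$ generates $\Gamma$.

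First I would handle existence of the representation on $\mathrm{span}(\Gamma_r)$. By Proposition 1.3, the universal action $\alpha$ of $G^+(\widehat{\Gamma})$ on $C^*(\Gamma)$ preserves each eigenspace of $D$. Since $D(\delta_g)=l(g)\delta_g$, the eigenspace of $D$ corresponding to the eigenvalue $r$ is precisely $\mathrm{span}(\Gamma_r)$, so one obtains an algebraic corepresentation
\[
\alpha(h)=\sum_{g\in\Gamma_r} g\otimes u^{(r)}_{gh},\qquad h\in\Gamma_r,
\]
with coefficients $u^{(r)}_{gh}\in C(G^+(\widehat{\Gamma}))$. To see that the matrix $u^{(r)}=(u^{(r)}_{gh})$ is unitary, I would use the trace-invariance in Proposition 1.3. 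The standard trace restricted to $\mathbb C\Gamma$ corresponds to the inner product $\langle g,h\rangle=tr(g^{-1}h)=\delta_{g,h}$, for which $\Gamma_r$ is an orthonormal basis of $\mathrm{span}(\Gamma_r)$. The condition $(tr\otimes\mathrm{id})\alpha=tr(\cdot)\,1$ on $\mathrm{span}(\Gamma_r)$ translates into $\sum_g u^{(r)*}_{gh}u^{(r)}_{gk}=\delta_{h,k}\cdot 1$, and a symmetric argument using $\alpha(h^*)=\alpha(h)^*$ gives the corresponding row relations; hence $u^{(r)}$ is unitary.

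Next I would prove faithfulness of $u=u^{(1)}$. Since $S=\Gamma_1$ generates $\Gamma$ as a group and satisfies $S=S^{-1}$, it generates $\mathbb C\Gamma$ as a $*$-algebra. For any element $g\in\Gamma$ written as a word $g=h_{i_1}\cdots h_{i_r}$ with $h_{i_j}\in S$, the entries of $\alpha(g)=\alpha(h_{i_1})\cdots\alpha(h_{i_r})$ are polynomials in the entries $\{u_{ab}:a,b\in S\}$. Consequently, the $*$-subalgebra of $C(G^+(\widehat{\Gamma}))$ generated by the entries of $u$ contains all matrix coefficients of $\alpha$ on $\mathbb C\Gamma$, hence all matrix coefficients of every $u^{(r)}$. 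By the universality underlying Proposition 1.3 — or equivalently the general fact from \cite{bg2}, \cite{bhs} that $C(G^+(\widehat{\Gamma}))$ is generated as a $C^*$-algebra by the matrix coefficients of $\alpha$ — this $*$-algebra is dense in $C(G^+(\widehat{\Gamma}))$, which is exactly the faithfulness statement.

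The whole argument is essentially structural; the only step that genuinely requires care is the identification of $C(G^+(\widehat{\Gamma}))$ as being generated by the coefficients of the action, which I would simply quote from \cite{bg2}, \cite{bhs}. Apart from this, there is no real obstacle: unitarity is a direct consequence of trace invariance, and faithfulness is a direct consequence of $S$ generating $\mathbb C\Gamma$ as a $*$-algebra.
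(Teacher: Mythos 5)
Your proof is correct and follows essentially the same route as the paper's, which simply invokes Proposition 1.3 for the invariance of $span(\Gamma_r)$, trace invariance for unitarity, and the universal property for faithfulness at $r=1$, referring to \cite{bhs} for details; you have merely written the steps out explicitly. One precision on the unitarity step: the ``symmetric'' computation, i.e.\ applying $tr\otimes{\rm id}$ to $\alpha(h)\alpha(k)^*=\alpha(hk^{-1})$, yields $\sum_g u^{(r)}_{gh}u^{(r)*}_{gk}=\delta_{hk}1$, which says that $\overline{u^{(r)}}$ is an isometry rather than giving the row relation $u^{(r)}u^{(r)*}=1$; full unitarity then follows from the standard fact that an isometric finite-dimensional corepresentation is automatically invertible (the entrywise antipode provides the inverse on the underlying Hopf $*$-algebra), and an invertible isometry is unitary.
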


\begin{proof}
We know from Proposition 1.3 that $G^+(\widehat{\Gamma})$ acts on $span(\Gamma_r)$, and the invariance of the
trace shows that the corresponding representation is unitary. The faithfulness property at $r=1$ follows from the
universal property of $G^+(\widehat{\Gamma})$. See \cite{bhs}.
\end{proof}

The above two statements make it clear how to explicitly construct $G^+(\widehat{\Gamma})$: since this quantum
group has to act faithfully on the span of $\Gamma_1=S$, we just have to consider the universal quantum group
acting on $span(S)$, and then divide by a suitable ideal.

So, let $A_u(m)$ be the universal $C^*$-algebra generated by the entries of a $m\times m$ matrix $u$, such that
both $u=(u_{ij})$ and $u^t=(u_{ji})$ are unitaries. This is a Hopf $C^*$-algebra in the sense of Woronowicz
\cite{wo1}, with comultiplication $\Delta(u_{ij})=\Sigma u_{ik}\otimes u_{kj}$, counit
$\varepsilon(u_{ij})=\delta_{ij}$, and antipode $S(u_{ij})=u_{ji}^*$. Observe that we have $S^2=id$. See Wang
\cite{wa1}.

According to the general results of Woronowicz in \cite{wo1}, we can think of $A_u(m)$ as being the algebra of
continuous functions on a certain compact quantum group. We denote this quantum group by $U_m^+$. That is, $U_m^+$
is the abstract object given by $A_u(m)=C(U_m^+)$.

\begin{theorem}
$G^+(\widehat{\Gamma})$ is the subgroup of $U_m^+$ presented by the following relations:
\begin{enumerate}
\item Those making $\alpha(g_i)=\Sigma g_j\otimes u_{ji}$ a morphism of $*$-algebras.

\item Those making $span(\Gamma_r)$ invariant under $\alpha$, for any $r\geq 2$.
\end{enumerate}
\end{theorem}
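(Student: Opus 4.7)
The plan is to prove both inclusions, combining the universal property of $G^+(\widehat{\Gamma})$ recorded in Proposition 1.3 with that of $A_u(m)$. Proposition 1.4 already embeds $G^+(\widehat{\Gamma})$ as a quantum subgroup of $U_m^+$, so the task reduces to identifying the defining ideal of this embedding.

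\medskip

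For the \emph{forward} direction, the action $\alpha$ of $G^+(\widehat{\Gamma})$ restricted to the $m$-dimensional subspace $\mathrm{span}(\Gamma_1) = \mathrm{span}(S)$ is by Proposition 1.4 a faithful unitary corepresentation; its matrix coefficients $v_{ij}$ generate $C(G^+(\widehat{\Gamma}))$ and, together with their adjoints, make both $v$ and $v^t$ unitary, so the universal property of $A_u(m)$ yields a surjective Hopf $*$-algebra morphism $\pi: A_u(m) \twoheadrightarrow C(G^+(\widehat{\Gamma}))$ with $\pi(u_{ij}) = v_{ij}$. By Proposition 1.3 the formula $\alpha(g_i) = \sum_j g_j \otimes v_{ji}$ extends to a $*$-algebra morphism on $\mathbb{C}\Gamma$ that preserves each $\mathrm{span}(\Gamma_r)$; hence the coefficients $v_{ij}$ satisfy the translated versions of both families (1) and (2), and $\pi$ factors through the quotient of $A_u(m)$ by the ideal they generate.

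\medskip

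For the \emph{reverse} direction, let $G$ be the compact quantum group presented by relations (1) and (2), with universal matrix $u = (u_{ij})$. By (1), the formula $\alpha(g_i) = \sum_j g_j \otimes u_{ji}$ extends unambiguously to a $*$-algebra morphism $\mathbb{C}\Gamma \to \mathbb{C}\Gamma \otimes_{alg} C(G)$. Coassociativity holds on each generator $g_i$ directly from the definition of $\Delta$ on $C(U_m^+)$, and propagates to all of $\mathbb{C}\Gamma$ by multiplicativity. Unitarity of $u$ together with the $*$-algebra property makes each $\alpha(g)$ a unitary, letting $\alpha$ extend continuously to the full $C^*$-algebra $A = C^*(\Gamma)$. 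Relation (2) guarantees invariance of each eigenspace of $D$ for $r \geq 2$, while the cases $r = 0, 1$ are immediate. Trace preservation is automatic: for $g \in \Gamma_r$ with $r \geq 1$ one has $\alpha(g) \in \mathrm{span}(\Gamma_r) \otimes_{alg} C(G) \subset \ker(tr \otimes \mathrm{id})$, while $\alpha(1) = 1 \otimes 1$. The universal property from Proposition 1.3 therefore provides a surjection $C(G^+(\widehat{\Gamma})) \twoheadrightarrow C(G)$ inverse to $\pi$.

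\medskip

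The bulk of the argument reduces to bookkeeping of the two universal properties; the one point that calls for a modicum of care is the extension of $\alpha$ from $\mathbb{C}\Gamma$ to $C^*(\Gamma)$ with respect to the full (maximal) $C^*$-norm, which rests on the unitarity of $u$ and the fact that $\alpha$ sends generators to unitaries. This kind of reconstruction step is standard in the literature on quantum symmetry groups of discrete group duals (see Section 2 of \cite{bhs}).
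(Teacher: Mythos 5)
Your proposal is correct and follows essentially the same route as the paper: embed $G^+(\widehat{\Gamma})$ into $U_m^+$ via the faithful unitary corepresentation on $span(S)$ coming from Propositions 1.3 and 1.4, then identify the defining ideal with the relations (1) and (2). The only cosmetic difference is that where the paper invokes Woronowicz's Tannakian results to assert that the kernel of $\Phi:A_u(m)\to C(G^+(\widehat{\Gamma}))$ is exactly the ideal generated by these relations, you spell this out as two mutually inverse surjections obtained from the universal properties of $A_u(m)$ and of $G^+(\widehat{\Gamma})$, deducing trace preservation from (2) exactly as in the paper.
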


\begin{proof}
This result is from \cite{bhs}, where it appears in a slightly different formulation. We present below an
explanation of the present formulation, along with a short proof.

We know from Proposition 1.3 and Proposition 1.4 that the quantum $G^+(\widehat{\Gamma})$ acts faithfully on
$span(S)$, and that the corresponding representation is unitary. In terms of Hopf algebras, this means that we
have a coaction map $\alpha:span(S)\to span(S)\otimes_{alg} C(G^+(\widehat{\Gamma}))$, and that if we denote this
map by $\alpha(g_i)=\Sigma g_j\otimes u_{ji}$, then the corepresentation $u=(u_{ij})$ is unitary, and its
coefficients $u_{ij}$ generate the $C^*$-algebra $C(G^+(\widehat{\Gamma}))$.

So, we obtain in this way a surjective morphism of Hopf $C^*$-algebras $\Phi:A_u(m)\to C(G^+(\widehat{\Gamma}))$,
which corresponds to an embedding of compact quantum groups $G^+(\widehat{\Gamma})\subset U_m^+$.

Let us try to understand now what the kernel of $\Phi$ is. According to the universal property of $\Gamma$, from Proposition 1.3, and to Woronowicz's Tannakian results in \cite{wo2}, this kernel should be exactly the ideal generated by the following relations:

(1) Those making $\alpha$ a morphism of $*$-algebras. This means that for any $h_i,k_i\in S$ satisfying $h_1\ldots h_p=k_1\ldots k_q$ we write the equality $\alpha(h_1\ldots h_p)=\alpha(k_1\ldots k_q)$ as a formula of type $\Sigma_{g\in\Gamma}g\otimes E_g=\Sigma_{g\in\Gamma}g\otimes F_g$, and $E_g=F_g$, with $g\in\Gamma$, are our relations.

(2) Those making $span(\Gamma_r)$ invariant under $\alpha$, for any $r\geq 2$. Once again, an explanation is
needed here. The idea is that if we denote by $\overline{\Gamma}_r$ the set of words of length $\leq r$, then
$span(\overline{\Gamma}_r)$ is invariant under $\alpha$. Now since we have $\Gamma_r\subset\overline{\Gamma}_r$,
the invariance condition for the subspaces $span(\Gamma_r)\subset span(\overline{\Gamma}_r)$ corresponds indeed to
certain relations.

Note that the condition in (2) implies in particular that the action $\alpha$ preserves the trace (as
$tr(\Gamma_r)=\{0\}$ for $r\neq 0$), and the proof is completed.
\end{proof}

Note that in all the cases computed so far in \cite{bhs} and \cite{bsk} the relations in (2) actually were a
consequence of these imposed by (1) -- this will also be the case in this paper. We do not know if this is true
for arbitrary $\Gamma$.

\section{Free products}

In this section we discuss the presentation of the quantum group $G^+(\widehat{\Gamma})$, in the case
$\Gamma=\mathbb Z_s^{*n}$. We will usually assume $s\in\{2,3,\ldots,\infty\}$, with the convention $\mathbb
Z_\infty=\mathbb Z$.

Let us first recall a basic result from \cite{bhs}, solving the problem at $n=1$.

\begin{proposition}
The quantum groups $G^+(\widehat{\mathbb Z_s})$ are as follows:
\begin{enumerate}
\item At $s=\infty$ we have $\widehat{\mathbb Z_s}=\mathbb T$, and $G^+(\mathbb T)=O_2=\mathbb T\rtimes\mathbb Z_2$.

\item More generally, at any $s\neq 4$ we have $G^+(\widehat{\mathbb Z_s})=\mathbb Z_s\rtimes\mathbb Z_2$.

\item At $s=4$ the quantum group $G^+(\widehat{\mathbb Z_s})$ is non-classical, and infinite.
\end{enumerate}
\end{proposition}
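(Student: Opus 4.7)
The plan is to apply Theorem 1.5 with the minimal symmetric generating set of $\mathbb Z_s$. For $s \ge 3$, take $S = \{g, g^{-1}\}$ so that $m = 2$, and label $g_1 = g$, $g_2 = g^{-1}$. The requirement that $\alpha$ be a $*$-morphism with $g_1^* = g_2$ forces $u_{12} = u_{21}^*$ and $u_{22} = u_{11}^*$, so writing $a = u_{11}$, $b = u_{21}$ the matrix takes the form $u = \begin{pmatrix} a & b^* \\ b & a^* \end{pmatrix}$. Unitarity of $u$ and $u^t$ then yields that $a, b$ are normal and satisfy $aa^* + b^*b = 1$, $ab + ba = 0$, and $ab^* + b^*a = 0$.

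The key step is to expand the group relation $gg^{-1} = 1$:
\[
\alpha(g)\alpha(g^{-1}) = g^2 \otimes ab^* + 1 \otimes (aa^* + bb^*) + g^{-2} \otimes ba^* = 1\otimes 1.
\]
For $s \neq 4$, the three elements $g^2, 1, g^{-2}$ are pairwise distinct in $\mathbb Z_s$, so separating coefficients gives the strong relations $ab^* = 0 = ba^*$. Combined with the unitarity identities, these force $b^*a = a^*b = 0$, and then a standard $C^*$-trick $(ab)^*(ab) = b^*(a^*a)b = (b^*a)(a^*b) = 0$ yields $ab = 0$, and likewise $ba = 0$. It follows that $a, b, a^*, b^*$ pairwise commute and generate a commutative $C^*$-algebra. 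By contrast, for $s = 4$ the collision $g^2 = g^{-2}$ collapses the above expansion to only $ab^* + ba^* = 0$, which is strictly weaker and does \emph{not} force commutativity---this is precisely where the dichotomy of the proposition originates.

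In the commutative case, the relation $ab = 0$ splits the joint spectrum into two pieces, $\{b = 0, \, a \in \mathbb T\}$ and $\{a = 0, \, b \in \mathbb T\}$. For $s = \infty$ these are two circles, and a direct computation of the group law from $\Delta(u_{ij}) = \sum_k u_{ik} \otimes u_{kj}$ identifies the resulting classical compact group with $O_2 = \mathbb T \rtimes \mathbb Z_2$. For finite $s \neq 4$, expanding $\alpha(g)^s = 1 \otimes 1$ produces the additional relation $a^s + b^s = 1$ (all mixed monomials vanish because $ab = 0$), which cuts each circle down to the $s$-th roots of unity; one reads off the dihedral group $\mathbb Z_s \rtimes \mathbb Z_2$ of order $2s$. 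The invariance condition (2) of Theorem 1.5 is automatic here, since $\alpha(g)^r = g^r \otimes a^r + g^{-r} \otimes b^r$ already lies in $\mathrm{span}(\Gamma_r) \otimes A$. The case $s = 2$ reduces to a one-line verification with $m = 1$, $u = (a)$, $a$ a self-adjoint unitary of square one.

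The main obstacle is part (3). For $s = 4$ the weaker set of relations permits genuinely non-commuting $a, b$, and one must substantiate this with an explicit non-classical coaction and a continuous family of such coactions to establish infiniteness. A natural route is to produce a matrix realization of $(a, b)$ satisfying $ab^* + ba^* = 0$ together with the degree-four relations coming from $g^4 = 1$, parametrized by a continuous variable, and verify that the induced quotient of $C(G^+(\widehat{\mathbb Z_4}))$ is non-commutative and has uncountable spectrum. This non-classical behaviour is ultimately caused by the collapse $g^2 = g^{-2}$ in $\mathbb Z_4$, and foreshadows the special status of $s = 4$ throughout the paper.
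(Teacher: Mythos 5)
Your treatment of parts (1) and (2) is correct and follows essentially the same route as the paper, which merely invokes Theorem 1.5 and refers to \cite{bhs}: expanding $\alpha(g)\alpha(g^{-1})=1\otimes 1$ and separating the coefficients of $1,g^{2},g^{-2}$ --- which are pairwise distinct precisely when $s\neq 4$ (and $s\neq 1,2$) --- kills the cross terms, and together with the biunitarity relations (normality of $a,b$ plus the anticommutation identities) forces commutativity; the Gelfand-type identification with $\mathbb Z_s\rtimes\mathbb Z_2$, resp.\ $O_2$ at $s=\infty$, and the verification that condition (2) of Theorem 1.5 is automatic, are then routine and correctly sketched. One caveat: your ``one-line'' case $s=2$ does not do what you claim. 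There $g=g^{-1}$, so $m=1$ and the universal algebra generated by a self-adjoint unitary of square one is $C(\mathbb Z_2)$, not $C(\mathbb Z_2\rtimes\mathbb Z_2)$; compare also Proposition 6.1 at $n=1$, which gives $H_1^+=\mathbb Z_2$. So the $s=2$ case needs to be flagged and discussed separately rather than declared trivial.

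The substantive gap is part (3). You correctly locate the source of the phenomenon (the collision $g^{2}=g^{-2}$ only yields $ab^*+ba^*=0$), but the observation that the relations are weaker does not prove that the quantum group is non-classical, let alone infinite: for that one must exhibit noncommuting operators $a,b$ satisfying \emph{all} the relations, i.e.\ biunitarity together with every relation extracted from $\alpha(g)^4=1\otimes1$ (which produces further degree-four constraints beyond the quadratic one), and then argue infiniteness. Your proposal only describes ``a natural route'' (a matrix realization depending on a continuous parameter) without carrying it out, so part (3) remains unproved. The paper settles this by citation to \cite{bhs}, and the actual content appears in its Proposition 6.3: the relations are organized as $AA^*+BB^*=1$, $AB+BA=0$, $AB^*+B^*A=0$, together with $T=A^2+B^2$ satisfying $T=T^*$, $T^2=1$, $TA=A^*$, $TB=B^*$, leading to a $*$-algebra isomorphism $C(G^+(\widehat{\mathbb Z_4}))\simeq C^*(D_\infty\times\mathbb Z_2)$, which is visibly noncommutative and infinite-dimensional. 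Some explicit construction of this kind --- for instance checking that sending $A\pm B$ to the two generating reflections of $D_\infty$ is compatible with all the relations, as in that proof --- is exactly what your sketch still needs to become a proof of (3).
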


\begin{proof}
These results can be deduced from Theorem 1.5, the idea being that for $s\neq 4$, the relations there make the
coefficients $u_{ij}$ commute.  See \cite{bhs}.
\end{proof}

In the general case now $n\in\mathbb N$, let $g_1,\ldots,g_n$ be the standard generators of $\mathbb Z_s^{*n}$, and endow this group with the generating set $S=\{g_{i1},g_{i2}\}$, where $g_{i1}=g_i,g_{i2}=g_i^{-1}$.

The fundamental coaction is denoted as follows:
$$\alpha(g_{ia})=\sum_{jb}g_{jb}\otimes u_{jb,ia}$$

The simplest case is when $s=\infty$. Here our group $\Gamma=\mathbb Z_s^{*n}$ is the free group $F_n$, and we have the following result, from \cite{bsk}.

\begin{proposition}
$H_{n0}^+=G^+(\widehat{F_n})$ is the subgroup of $U_{2n}^+$ presented by the following relations, between the generators denoted $u_{ia,jb}$ with $i,j=1,\ldots,n$ and $a,b=1,2$:
\begin{enumerate}
\item The entries $u_{ia,jb}$ are partial isometries.

\item We have $u_{ia,jb}^*=u_{i\bar{a},j\bar{b}}$, for any $i,j,a,b$.
\end{enumerate}
\end{proposition}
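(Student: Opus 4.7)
The plan is to apply Theorem 1.5 to $\Gamma = F_n$. Since $F_n$ has no group relations beyond those forced by the free product structure and the involutions $g_i g_i^{-1} = g_i^{-1} g_i = e$, condition (1) of Theorem 1.5 reduces to
\[
\alpha(g_{ia})^* = \alpha(g_{i\bar a}), \qquad \alpha(g_{i1})\alpha(g_{i2}) = \alpha(g_{i2})\alpha(g_{i1}) = 1 \otimes 1.
\]
Expanding the star relation via $\alpha(g_{ia}) = \sum_{jb} g_{jb} \otimes u_{jb,ia}$ and $g_{jb}^* = g_{j\bar b}$, and matching coefficients on the basis of length-$1$ elements, yields relation (2) of the statement directly.

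For relation (1), I substitute $u_{kc,i2} = u_{k\bar c, i1}^*$ from (2) into the multiplicativity relation and expand. By uniqueness of reduced forms in $F_n$, each length-$2$ word is represented by a unique tuple, so matching coefficients forces $u_{jb,i1} u_{kc,i1}^* = 0$ for $(j,b) \neq (k,c)$; the identity coefficient $\sum_{jb} u_{jb,i1} u_{jb,i1}^* = 1$ is automatic from the unitarity of $u^t$. Combining the off-diagonal vanishing with $\sum_{kc} u_{kc,i1}^* u_{kc,i1} = 1$ (from $u^*u = 1$) and computing
\[
u_{jb,i1} = u_{jb,i1} \sum_{kc} u_{kc,i1}^* u_{kc,i1} = \sum_{kc} (u_{jb,i1} u_{kc,i1}^*) u_{kc,i1} = u_{jb,i1} u_{jb,i1}^* u_{jb,i1}
\]
shows that $u_{jb,i1}$ is a partial isometry; relation (2) transports this to the $i2$-columns.

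Conversely, suppose (1) and (2) hold in $U_{2n}^+$. By (1) each $u_{kc,i1} u_{kc,i1}^*$ is a projection, and these projections sum to $1$ by the unitarity of $u^t$, hence are pairwise orthogonal; a short calculation with the partial-isometry identities then yields $u_{jb,i1} u_{kc,i1}^* = 0$ for $(jb) \neq (kc)$. Reversing the forward argument recovers the multiplicativity relations, giving all of Theorem 1.5(1). For Theorem 1.5(2), I expand $\alpha$ on a reduced word $g_{i_1 a_1} \cdots g_{i_r a_r}$: a summand is indexed by a tuple $(j_1 b_1, \ldots, j_r b_r)$, and if the resulting product word has length less than $r$ then some consecutive inverse pair $(j_{k+1}, b_{k+1}) = (j_k, \bar{b_k})$ occurs in the tuple. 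The corresponding factor $u_{j_k b_k, i_k a_k} u_{j_k \bar{b_k}, i_{k+1} a_{k+1}}$ rewrites via (2) as $u_{j_k b_k, i_k a_k} u_{j_k b_k, i_{k+1} \bar{a_{k+1}}}^*$, which vanishes by the off-diagonal orthogonality unless $(i_k, a_k) = (i_{k+1}, \bar{a_{k+1}})$; and this exception is precisely excluded by reducedness of the source word. Hence only length-$r$ contributions survive, proving invariance of $\text{span}(\Gamma_r)$.

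The main subtle point is precisely this last step: the invariance conditions of Theorem 1.5(2) are not additional constraints but a consequence of (1)--(2), with reducedness of the source word conspiring with the off-diagonal orthogonality in each column of $u$ to kill every would-cancel contribution. This clean outcome is special to the free group, and the same step will become the principal source of technical difficulty when passing from $F_n$ to $\mathbb Z_s^{*n}$ in the subsequent sections, where the extra relations $g_i^s = e$ must be enforced.
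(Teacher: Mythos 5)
Your overall strategy is the same as the paper's: extract from Theorem 1.5 the star relation (giving $u_{ia,jb}^*=u_{i\bar a,j\bar b}$) and the relations $\alpha(g_{ia})\alpha(g_{i\bar a})=1\otimes 1$, match coefficients on reduced words of length $2$ to obtain $u_{jb,ia}u_{kc,ia}^*=0$ for $(jb)\neq(kc)$ together with $\sum_{jb}u_{jb,ia}u_{jb,ia}^*=1$, and deduce the partial isometry property; the paper then simply refers to the proof of Theorem 5.1 in \cite{bhs} for the automatic invariance of $span(\Gamma_r)$, which you instead argue directly. The one point you must repair is the bookkeeping of which projections are orthogonal. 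In your converse direction you invoke the \emph{range} projections $u_{kc,i1}u_{kc,i1}^*$ (summing to $1$ since $u^t$ is unitary, hence pairwise orthogonal) to conclude $u_{jb,i1}u_{kc,i1}^*=0$; but orthogonality of the range projections only yields $u_{jb,i1}^*u_{kc,i1}=0$, as one sees by multiplying $u_{jb,i1}u_{jb,i1}^*\,u_{kc,i1}u_{kc,i1}^*=0$ by $u_{jb,i1}^*$ on the left and $u_{kc,i1}$ on the right. What you actually need is the orthogonality of the \emph{initial} projections $u_{kc,i1}^*u_{kc,i1}$, which sum to $1$ along a column because $u$ (not $u^t$) is unitary; then $u_{jb,i1}u_{kc,i1}^*=u_{jb,i1}\bigl(u_{jb,i1}^*u_{jb,i1}\bigr)\bigl(u_{kc,i1}^*u_{kc,i1}\bigr)u_{kc,i1}^*=0$. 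The same care is needed in your verification of Theorem 1.5(2): the factor $u_{j_kb_k,i_ka_k}\,u_{j_kb_k,i_{k+1}\bar a_{k+1}}^*$ is a \emph{same-row, cross-column} product, so it is not an instance of the same-column relation $u_{jb,ia}u_{kc,ia}^*=0$ you established; it vanishes by the analogous argument using orthogonality of the initial projections along a row, which sum to $1$ because $u^t$ is unitary. With these two substitutions your argument is complete and, on the invariance step, more self-contained than the paper's, which defers entirely to \cite{bhs}.
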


\begin{proof}
This is done in \cite{bsk}, the idea being as follows.

First, we know from Theorem 1.5 that $\alpha$ must preserve the involution, i.e. that we must have $\alpha(g_{ia})^*=\alpha(g_{i\bar{a}})$ for any $i,a$, and this gives $u_{ia,jb}^*=u_{i\bar{a},j\bar{b}}$, for any $i,j,a,b$.

Next, the other condition that we get from Theorem 1.5 is:
\begin{eqnarray*}
\alpha(g_{ia})\alpha(g_{i\bar{a}})=1\otimes 1
&\iff&\sum_{jbkc}g_{jb}g_{kc}\otimes u_{jb,ia}u_{kc,i\bar{a}}=1\otimes 1\\
&\iff&\sum_{jbkc}g_{jb}g_{k\bar{c}}\otimes u_{jb,ia}u_{kc,ia}^*=1\otimes 1
\end{eqnarray*}

Thus we must have $u_{jb,ia}u_{kc,ia}^*=0$ for $(jb)\neq (kc)$, and $\sum_{jb}u_{jb,ia}u^*_{jb,ia}=1$. By
multiplying this latter equality at right by $u_{kc,ia}$, we obtain $u_{kc,ia}u_{kc,ia}^*u_{kc,ia}=u_{kc,ia}$, so
$u_{kc,ia}$ must be a partial isometry. This leads to the conclusion in the statement, as the conditions in (2)
in Theorem 1.5 are automatically satisfied, see the proof of Theorem 5.1 in \cite{bhs}.
\end{proof}

In the above statement, the notation $H_{n0}^+$ comes from the fact that the quantum group under consideration is
part of a certain two-parameter series $H_{nm}^+$, which is such that $H_m^+=H_{0m}^+$ is the hyperoctahedral
quantum group in  \cite{bbc}. See \cite{bsk}.

Let us record as well the following useful reformulation of the above result.

\begin{proposition}
$C(H_{n0}^+)$ is the universal algebra generated by variables $u_{ia,jb}$ with $i,j=1,\ldots,n$ and $a,b=1,2$, subject to the following relations:
\begin{enumerate}
\item $u_{ia,jb}$ are partial isometries, satisfying $u_{ia,jb}^*=u_{i\bar{a},j\bar{b}}$.

\item $u_{jb,ia}u_{kc,ia}^*=0$ for $(jb)\neq (kc)$, and $\sum_{jb}u_{jb,ia}u^*_{jb,ia}=1$.
\end{enumerate}
\end{proposition}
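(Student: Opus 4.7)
The plan is to identify the universal $C^*$-algebra of Proposition 2.3 with the presentation of Proposition 2.2 by matching generators and checking relations in both directions. Proposition 2.3 differs from Proposition 2.2 only in that the column-orthogonality relations, which are derived inside the proof of 2.2 from the group relation $g_{ia}g_{i\bar a}=1$, are promoted to explicit generating relations.

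First I would verify that the relations (1)--(2) of 2.3 all hold in $C(H_{n0}^+)$. Relation (1) is literally the conjunction of (1) and (2) of Proposition 2.2, so it holds by definition. For relation (2), I would repeat the key step of the proof of 2.2: using (1) of 2.3 to rewrite $u_{kc,i\bar a}=u_{k\bar c,ia}^*$ in the expansion of $\alpha(g_{ia})\alpha(g_{i\bar a})=1\otimes 1$ gives
\[
\sum_{jb,kc} g_{jb}g_{k\bar c}\otimes u_{jb,ia}u_{kc,ia}^* = 1\otimes 1
\]
in $C^*(F_n)\otimes C(H_{n0}^+)$. In the free group $F_n$ one has $g_{jb}g_{k\bar c}=1$ iff $(jb)=(kc)$, and for $(jb)\neq(kc)$ the products $g_{jb}g_{k\bar c}$ are distinct reduced words of length $2$, hence linearly independent in $C^*(F_n)$. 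Matching coefficients then yields $\sum_{jb}u_{jb,ia}u_{jb,ia}^*=1$ (the coefficient of $1_{F_n}$) and $u_{jb,ia}u_{kc,ia}^*=0$ for $(jb)\neq(kc)$ (the other coefficients), i.e.\ relation (2) of 2.3.

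For the converse direction, I would note that the relations (1) and (2) of Proposition 2.2 are contained verbatim inside relation (1) of 2.3, so the universal property of 2.2 immediately furnishes a surjection $C(H_{n0}^+)\to B$ from $C(H_{n0}^+)$ onto the universal algebra $B$ of 2.3 matching generators. Combined with the reverse surjection $B\to C(H_{n0}^+)$ provided by the previous step, and the fact that both maps are determined by what they do on generators, this identifies the two algebras.

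The main (modest) obstacle is the bookkeeping in the forward step: one must carefully apply the involution relation in (1) of 2.3 to convert the second factor of $\alpha(g_{ia})\alpha(g_{i\bar a})$ into a form involving $u_{kc,ia}^*$, so that the free-group basis of $C^*(F_n)$ organizes the resulting identity by length and delivers the two clauses of 2.3(2) cleanly; once this rewriting is in place, the linear independence of distinct reduced words in $F_n$ finishes the argument.
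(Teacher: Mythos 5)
Your forward step is fine: expanding $\alpha(g_{ia})\alpha(g_{i\bar a})=1\otimes 1$ over the linearly independent reduced words of $F_n$ does yield the column relations in (2), and this is exactly the computation already carried out inside the paper's proof of Proposition 2.2 (one can also get (2) purely algebraically from biunitarity together with (1), via orthogonality of the projections $u_{jb,ia}u_{jb,ia}^*$, which sum to $1$ in each column). The gap is in your converse direction. Proposition 2.2 presents $C(H_{n0}^+)$ as a quotient of $A_u(2n)$, so its universal property applies only to algebras generated by the entries of a $2n\times 2n$ matrix $u$ such that $u$ \emph{and} $u^t$ are unitaries and the relations (1),(2) of 2.2 hold. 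To obtain your surjection $C(H_{n0}^+)\to B$ you must therefore first prove that $u$ and $u^t$ are unitary in $B$; noting that the partial isometry and involution relations of 2.2 appear verbatim among the relations of 2.3 is not sufficient, because biunitarity is part of the ambient presentation in 2.2 rather than one of the listed relations, and you never address it.

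This missing implication is precisely where the content of the proposition sits -- the paper's one-line proof says exactly this, namely that the relations in (2) are what stand in for the unitarity of $u,u^t$. Moreover it is not a formal consequence of (1)+(2) as literally stated: those relations are column-local, giving $\sum_{jb}u_{jb,ia}u_{jb,ia}^*=\sum_{jb}u_{jb,ia}^*u_{jb,ia}=1$ and pointwise orthogonality within each column, but saying nothing about rows. For instance at $n=1$, setting $u_{11,11}=u_{11,12}=e_{12}$ and $u_{12,11}=u_{12,12}=e_{21}$ in $M_2(\mathbb C)$ satisfies all of (1),(2), yet $u_{11,11}u_{11,11}^*+u_{11,12}u_{11,12}^*=2e_{11}\neq 1$, so $u$ is not unitary; the same local pattern embeds into the first two columns for any $n$. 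So either you read Proposition 2.3, in line with 2.2 and Theorem 2.5, as a presentation inside $U_{2n}^+$ (biunitarity retained among the defining relations), in which case your converse becomes immediate but your argument should say so explicitly, or you must supply an additional argument producing the row/off-diagonal unitarity conditions -- which, as the example shows, cannot come from (1)+(2) alone. As written, the identification of the two algebras is not complete.
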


\begin{proof}
This is indeed just a technical reformulation of the above result, the idea being that the relations in (2)
correspond to the fact that the matrices $u,u^t$ are unitaries.
\end{proof}

In the general case now, we have the following statements.

\begin{lemma}
For $s\geq 5$ and $n\geq 2$, the quantum group $H_{n0}^{s+}=G^+(\widehat{\mathbb Z_s^{*n}})$ is the subgroup of
$H_{n0}^+$ presented by the relations coming from $\alpha(g_{ia})^s=1\otimes 1$, for any $i,a$.
\end{lemma}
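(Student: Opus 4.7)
The plan is to apply Theorem 1.5 to $\Gamma = \mathbb{Z}_s^{*n}$ and to bootstrap from the free group case, where Proposition 2.3 already identifies $G^+(\widehat{F_n}) = H_{n0}^+$. The guiding idea is that $\mathbb{Z}_s^{*n}$ is obtained from $F_n$ by imposing the normal-closure relations $g_i^s = 1$ for $i = 1, \ldots, n$, so the presentation of $G^+(\widehat{\mathbb{Z}_s^{*n}})$ should be that of $H_{n0}^+$ augmented by the translates of these relations through $\alpha$.

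For condition (1) of Theorem 1.5, I would first observe that every relation of $F_n$ also holds in $\mathbb{Z}_s^{*n}$, so the $*$-morphism requirement of $\alpha$ automatically incorporates all the relations that were used to derive $H_{n0}^+$ in Proposition 2.3. The additional group-theoretic relations in $\mathbb{Z}_s^{*n}$ are (as a normal subgroup of $F_n$) generated by $\{g_i^s\}_{i=1}^n$; via the coaction formula $\alpha(g_{ia}) = \sum_{jb} g_{jb} \otimes u_{jb,ia}$, these translate into $\alpha(g_{i1})^s = 1 \otimes 1$. The companion relation $\alpha(g_{i2})^s = 1 \otimes 1$ is then free: since $g_{i2} = g_{i1}^{-1} = g_{i1}^*$ in $\mathbb{C}\Gamma$ and $\alpha$ is a $*$-morphism, we have $\alpha(g_{i2}) = \alpha(g_{i1})^*$, hence $\alpha(g_{i2})^s = (\alpha(g_{i1})^s)^* = 1 \otimes 1$.

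For condition (2) of Theorem 1.5, I would invoke the general remark made in the paper right after that theorem, asserting that in all the relevant cases the length-eigenspace invariance follows from condition (1); this is the mechanism used in Proposition 2.3 (where one cites the proof of Theorem 5.1 of [bhs]). Concretely, the partial-isometry relations and orthogonality conditions built into $H_{n0}^+$ already enforce that, in the expansion of $\alpha$ applied to a reduced word of length $r$, only length-$r$ words survive on the left-hand factor, with the only additional cancellations coming from the imposed $s$-th power relations.

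The main obstacle I anticipate lies in the invariance step: one must verify that for $s \geq 5$ and $n \geq 2$, there are no extra ``accidental'' identifications in $\mathbb{Z}_s^{*n}$ between words of different lengths that go beyond the $H_{n0}^+$-relations combined with $\alpha(g_{i1})^s = 1$. The hypotheses $s \geq 5$ and $n \geq 2$ are what provide enough combinatorial room for reduced-word normal forms to behave cleanly (in contrast with the pathological $s=2, 4$ cases mentioned earlier in the paper), so once this is checked, the lemma follows by assembling the two observations above.
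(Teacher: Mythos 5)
The decisive gap is in your first step: the claim that, since every relation of $F_n$ also holds in $\mathbb Z_s^{*n}$, the $*$-morphism condition ``automatically incorporates'' the relations used in Propositions 2.2--2.3 to define $H_{n0}^+$. This is not automatic, and it is precisely the point where the hypothesis $s\geq 5$ (or $s=3$) must be used. The relations produced by condition (1) of Theorem 1.5 are the coefficient identities $E_g=F_g$ obtained after regrouping the expansion $\sum_{jbkc} g_{jb}g_{k\bar{c}}\otimes u_{jb,ia}u_{kc,ia}^*$ according to the elements $g\in\Gamma$; when one passes from $F_n$ to the quotient $\mathbb Z_s^{*n}$, formerly distinct group elements may coincide, the corresponding coefficients get summed, and the induced relations on the $u_{ia,jb}$ become \emph{weaker}, not stronger. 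So to obtain $G^+(\widehat{\mathbb Z_s^{*n}})\subset H_{n0}^+$ one must verify that the products $g_{jb}g_{k\bar{c}}$ with $(jb)\neq(kc)$ are pairwise distinct and different from $1$ in $\mathbb Z_s^{*n}$; this holds exactly because $s\neq 1,2,4$ (at $s=2$ one has $g_j^2=1$, and at $s=4$ one has $g_j^2=g_j^{-2}$, so coefficients merge), and it is this distinctness that lets the computation of Proposition 2.2 go through and force the partial isometry and orthogonality relations. Your proposal instead locates the role of $s\geq5$ only in the invariance step (``accidental identifications between words of different lengths'') and treats the inclusion into $H_{n0}^+$ as formal; that reasoning scheme is invalid in general, since a group quotient does not induce an inclusion of quantum isometry groups: $\mathbb Z_4$ is a quotient of $\mathbb Z$, yet $G^+(\widehat{\mathbb Z_4})$ is non-classical and infinite (Proposition 2.1), hence cannot be a quantum subgroup of the classical group $G^+(\widehat{\mathbb Z})=\mathbb T\rtimes\mathbb Z_2$.

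The rest of your plan is consistent with the paper's argument: once the inclusion into $H_{n0}^+$ is in place, the only relations of $\mathbb Z_s^{*n}$ beyond the free ones are generated by $g_i^s=1$, so condition (1) reduces to $\alpha(g_{ia})^s=1\otimes 1$ (and your remark that the case $a=2$ follows from $a=1$ by applying the involution is fine), while condition (2) is dealt with exactly as you indicate, by comparison with the proof of Theorem 5.1 in \cite{bhs}. But without the length-two distinctness check above, the first and essential step of the lemma is missing, and the stated hypotheses on $s$ are never genuinely used where they are needed.
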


\begin{proof}
First, let us mention the fact that the result holds as well at $s=3$, with the same proof as below, and at
$s=\infty$ too, with the convention that the extra relations do not exist in this case. These special cases will
be discussed in section 6 below.

Consider the standard coaction, $\alpha(g_{ia})=\sum_{jb}g_{jb}\otimes u_{jb,ia}$. Our assumption $s\neq 1,2,4$ shows that the elements $g_{jb}g_{k\bar{c}}$ are distinct and different from $1$ for $(jb)\neq (kc)$, so the computations in the proof of Proposition 2.2 apply, and show that the generators $u_{ia,jb}$ must satisfy the conditions found there. That is, we have indeed $H_{n0}^{s+}\subset H_{n0}^+$.

According now to Theorem 1.5, the conditions which are left are simply those coming from the equalities
$\alpha(g_{ia})^s=1\otimes 1$, for any $i,a$, and we are done. The conditions in (2) in Theorem 1.5 are satisfied
due to the fact that we are dealing with a quantum subgroup of $H^+_{n0}$ and the only extra relations in $\mathbb Z_s^{*n}$ are those of the form $g^s=e$ -- compare with the proof of Theorem 5.1 in \cite{bhs}.
\end{proof}

\begin{theorem}
For $s\geq 5$ and $n\geq 2$, the quantum group $H_{n0}^{s+}=G^+(\widehat{\mathbb Z_s^{*n}})$ is the subgroup of $H_{n0}^+$ presented by the relation $\xi\in Fix(u^{\otimes s})$, where $\xi=\sum_{ia}e_{ia}^{\otimes s}$.
\end{theorem}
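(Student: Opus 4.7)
The plan is to verify the equivalence of the two presentations of $H_{n0}^{s+}$ by translating both into explicit relations on the generating matrix entries, and checking that they define the same two-sided $*$-ideal in $C(H_{n0}^+)$.

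First, I would compute both relations coordinate-wise. The condition $\xi\in Fix(u^{\otimes s})$, with $\xi=\sum_{ia}e_{ia}^{\otimes s}$, expands as follows: a direct calculation shows that the coefficient of $e_{j_1 b_1}\otimes\cdots\otimes e_{j_s b_s}$ in $u^{\otimes s}\xi$ is $\sum_{ia}u_{j_1 b_1,ia}u_{j_2 b_2,ia}\cdots u_{j_s b_s,ia}$, while the same coefficient in $\xi\otimes 1$ is $\delta_{(j_1 b_1)=\cdots=(j_s b_s)}$. Hence $\xi\in Fix(u^{\otimes s})$ amounts to the family of relations
\[
\sum_{ia}u_{j_1 b_1,ia}u_{j_2 b_2,ia}\cdots u_{j_s b_s,ia}=\delta_{(j_1 b_1)=\cdots=(j_s b_s)}\qquad(\star)
\]
indexed by tuples $(j_1 b_1,\ldots,j_s b_s)$. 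Meanwhile, expanding $\alpha(g_{ia})^s=\sum g_{j_1 b_1}\cdots g_{j_s b_s}\otimes u_{j_1 b_1,ia}\cdots u_{j_s b_s,ia}$ and regrouping terms by the element $w\in\mathbb Z_s^{*n}$ that the group product represents, the Lemma 2.4 relations take the form
\[
\sum_{(j_k b_k):\,g_{j_1 b_1}\cdots g_{j_s b_s}=w}u_{j_1 b_1,ia}\cdots u_{j_s b_s,ia}=\delta_{w,1}\qquad(\star\star)
\]
for every $(ia)$ and every $w\in\mathbb Z_s^{*n}$.

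To bridge $(\star)$ and $(\star\star)$, I would introduce the equivariant linear map $\Phi:(\mathbb C^{2n})^{\otimes s}\to\mathbb C\mathbb Z_s^{*n}$, $e_{j_1 b_1}\otimes\cdots\otimes e_{j_s b_s}\mapsto g_{j_1 b_1}\cdots g_{j_s b_s}$, which intertwines $u^{\otimes s}$ with the coaction $\alpha$ on $\mathbb C\mathbb Z_s^{*n}$. Since $\Phi(\xi)=\sum_{ia}g_{ia}^s=2n\cdot 1$ in $\mathbb Z_s^{*n}$ and since $u^{\otimes s}$ is unitary (so its restriction to $\ker\Phi$ has an invariant complement $W$), one decomposes $\xi=\xi_W+\xi_{\ker}$ accordingly. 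A short $\Phi$-equivariance argument together with the injectivity of $\Phi|_W$ shows that the $W$-component of the identity $u^{\otimes s}\xi=\xi\otimes 1$ holds automatically in $C(H_{n0}^{s+})$; the theorem reduces to showing that the $\ker\Phi$-component is also zero there.

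For $s$ odd with $s\geq 5$ the only length-$s$ tuples whose group product is the identity in $\mathbb Z_s^{*n}$ are the $2n$ ``all equal'' tuples $(ia,\ldots,ia)$, hence $\xi_{\ker}=0$ and the $\Phi$-equivariance argument alone proves the theorem. For $s$ even, non-trivial cancellation tuples appear (e.g.\ $g_1 g_2 g_2^{-1}g_1^{-1}g_1 g_1^{-1}$ at $s=6$), so $\xi_{\ker}$ is generally non-zero; here one invokes the $H_{n0}^+$ partial-isometry orthogonality from Proposition 2.2 ($u_{jb,ia}u_{kc,ia}^*=0$ for $(jb)\neq(kc)$) to show that the contribution of these ``extra'' tuples to $(\star)$ vanishes, completing the reduction. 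The combinatorial bookkeeping in this even case is the main obstacle; the hypothesis $s\geq 5$ is essential as it excludes the degenerate cases $s=2,4$ where Proposition 2.1 exhibits qualitatively different behaviour of $H_{n0}^{s+}$.
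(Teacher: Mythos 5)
Your coordinatewise reductions $(\star)$ and $(\star\star)$ are correct, and the $\Phi$-equivariance idea is a genuinely attractive alternative to the paper's hands-on computation for one half of the statement: in $C(G^+(\widehat{\mathbb Z_s^{*n}}))$ the coaction does descend to $\mathbb C\mathbb Z_s^{*n}$, so $\ker\Phi$ is a subcorepresentation of $u^{\otimes s}$, its orthocomplement $W$ is invariant by complete reducibility of unitary corepresentations, and for odd $s$ an abelianization argument confirms your claim that the only length-$s$ tuples multiplying to $1$ are the constant ones, whence $\xi\in W$ and $u^{\otimes s}(\xi\otimes 1)=\xi\otimes 1$ follows from the injectivity of $\Phi|_W$. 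Pleasantly, this absorbs in one stroke the coincidences $g_{jb}^pg_{kc}^q=g_{j\bar b}^{s-p}g_{k\bar c}^{s-q}$ ($j\neq k$, $p+q=s$) that the paper must treat by an explicit partial-isometry manipulation.

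There is, however, a genuine gap: you only ever argue the implication $(\star\star)\Rightarrow(\star)$, i.e.\ the inclusion of $G^+(\widehat{\mathbb Z_s^{*n}})$ into the subgroup of $H_{n0}^+$ cut out by $\xi\in Fix(u^{\otimes s})$. The theorem asserts equality, so you must also show that $\xi\in Fix(u^{\otimes s})$, together with the $H_{n0}^+$ relations, forces $\alpha(g_{ia})^s=1\otimes 1$; without this, the fixed-point subgroup need not act on $\widehat{\mathbb Z_s^{*n}}$ at all, and the universal property cannot be invoked. This converse cannot be run through $\Phi$: the equivariance of $\Phi$ as a map onto $\mathbb C\mathbb Z_s^{*n}$ is precisely equivalent to the relations you would be deriving, so the argument is circular in that direction. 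The paper handles it by isolating the individual $(ia)$-terms of the sums in $(\star)$, and the individual coincidence-word terms, using the orthogonality relations $u_{jb,ia}u_{kc,ia}^*=0$ for $(jb)\neq(kc)$ and the identity $vv^*v=v$ for partial isometries; some such argument is unavoidable. A secondary gap: for even $s\geq 6$ the vector $\xi$ is not orthogonal to $\ker\Phi$ (non-reduced tuples such as $g_1g_1^{-1}g_2g_2^{-1}g_1g_1^{-1}$ multiply to $1$), so even your own direction is left as unspecified ``bookkeeping''. Note also that the truly delicate relations are not the identity-product tuples you focus on (the non-reduced ones are killed by orthogonality for every $s$), but the coincidences displayed above, which occur for every $s\geq 2$, odd or even, and which resurface unavoidably in the missing converse.
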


\begin{proof}
According to Lemma 2.4, we just have to understand the meaning of the relations coming from the conditions $\alpha(g_{ia})^s=1\otimes 1$, for any $i,a$. We have:
\begin{eqnarray*}
\alpha(g_{ia})^s
&=&\sum_{j_1\ldots j_s}\sum_{b_1\ldots b_s}g_{j_1b_1}\ldots g_{j_sb_s}\otimes u_{j_1b_1,ia}\ldots u_{j_sb_s,ia}\\
&=&\sum_{\gamma}\gamma\otimes\left(\sum_{j_1\ldots j_s}\sum_{b_1\ldots b_s}\delta_{\gamma,g_{j_1b_1}\ldots g_{j_sb_s}}u_{j_1b_1,ia}\ldots u_{j_sb_s,ia}\right)
\end{eqnarray*}

We know from Proposition 2.3 that we have $u_{jb,ia}u_{kc,ia}=0$ for $(jb)\neq (k\bar{c})$, so we can erase from the above formula all the decompositions $\gamma=g_{j_1b_1}\ldots g_{j_sb_s}$ which are not reduced with respect to the rules $g_{ia}g_{i\bar{a}}=1$. So, if we denote by $W_s$ the set of reduced words of length $s$, we have the following formula:
$$\alpha(g_{ia})^s=1\otimes\left(\sum_{jb}u_{jb,ia}^s\right)+\sum_{\gamma\in W_s}\gamma\otimes \left(\sum_{j_1\ldots j_s}\sum_{b_1\ldots b_s}\delta_{\gamma,g_{j_1b_1}\ldots g_{j_sb_s}}u_{j_1b_1,ia}\ldots u_{j_sb_s,ia}\right)$$

Let us look now at an equality of type $\gamma=\gamma'$, with $\gamma,\gamma'\in W_s$. This can happen only in the situation $g_{jb}^pg_{kc}^q=g_{j\bar{b}}^{s-p}g_{k\bar{c}}^{s-q}$, with $j\neq k$ and $p+q=s$. So, if we denote by $W_s'\subset W_s$ the set of words which are not of this type, we have the following formula:
\begin{eqnarray*}
\alpha(g_{ia})^s
&=&1\otimes\left(\sum_{jb}u_{jb,ia}^s\right)+\sum_{j\neq k}\sum_c\sum_{s=p+q}g_{j1}^pg_{kc}^q\otimes (u_{j1,ia}^pu_{kc,ia}^q+u_{j2,ia}^{s-p}u_{k\bar{c},ia}^{s-q})\\
&&+\sum_{g_{j_1b_1}\ldots g_{j_sb_s}\in W_s'}g_{j_1b_1}\ldots g_{j_sb_s}\otimes u_{j_1b_1,ia}\ldots u_{j_sb_s,ia}
\end{eqnarray*}

Thus the equality $\alpha(g_{ia})^s=1\otimes 1$ is equivalent to the following conditions:
$$\begin{cases}
\sum_{jb}u_{jb,ia}^s=1&\forall\,i,\forall\,a\\
u_{j1,ia}^pu_{kc,ia}^q+u_{j2,ia}^{s-p}u_{k\bar{c},ia}^{s-q}=0&\forall\,j\neq k,\forall\,c,\forall\,s=p+q\\
u_{j_1b_1,ia}\ldots u_{j_sb_s,ia}=0&\forall\,g_{j_1b_1}\ldots g_{j_sb_s}\in W_s'
\end{cases}\quad\quad (1)$$

Let us look now at the condition $\xi\in Fix(u^{\otimes s})$. We have:
\begin{eqnarray*}
u^{\otimes s}(\xi\otimes 1)
&=&\left(\sum_{i_rj_ra_rb_r}e_{j_1b_1,i_1a_1}\otimes\ldots\otimes e_{j_sb_s,i_sa_s}\otimes u_{j_1b_1,i_1a_1}\ldots u_{j_sb_s,i_sa_s}\right)\left(\sum_{ia}e_{ia}^{\otimes s}\otimes 1\right)\\
&=&\sum_{j_1\ldots j_s}\sum_{b_1\ldots b_s}e_{j_1b_1}\otimes\ldots e_{j_sb_s}\otimes\left(\sum_{ia}u_{j_1b_1,ia}\ldots u_{j_sb_s,ia}\right)
\end{eqnarray*}

By using now the formula $u_{jb,ia}u_{kc,ia}=0$, valid for any $(jb)\neq (k\bar{c})$, we get:
\begin{eqnarray*}
u^{\otimes s}(\xi\otimes 1)
&=&\sum_{jb}e_{jb}^{\otimes s}\otimes\left(\sum_{ia}u_{jb,ia}^s\right)\\
&&+\sum_{g_{j_1b_1}\ldots g_{j_sb_s}\in W_s}e_{j_1b_1}\otimes\ldots e_{j_sb_s}\otimes\left(\sum_{ia}u_{j_1b_1,ia}\ldots u_{j_sb_s,ia}\right)
\end{eqnarray*}

Thus the condition $\xi\in Fix(u^{\otimes s})$ is equivalent to the following conditions:
$$\begin{cases}
\sum_{jb}u_{jb,ia}^s=1&\forall\,i,\forall\,a\\
\sum_{ia}u_{j_1b_1,ia}\ldots u_{j_sb_s,ia}=0&\forall\,g_{j_1b_1}\ldots g_{j_sb_s}\in W_s
\end{cases}\quad\quad (2)$$

Let us prove now that we have $(1)\iff (2)$. We begin with $(1)\implies (2)$. Here all the assertions are clear, except perhaps for the equality $\sum_{ia}u_{j_1b_1,ia}\ldots u_{j_sb_s,ia}=0$, with $g_{j_1b_1}\ldots g_{j_sb_s}\in W_s-W_s'$. Now by recalling the definition of $W_s,W_s'$, our word $g_{j_1b_1}\ldots g_{j_sb_s}$ must be of the form $g_{jb}^pg_{kc}^q$ with $j\neq k$ and $p+q=s$, and we must prove that we have $\sum_{ia}u_{jb,ia}^pu_{kc,ia}^q=0$ in this situation. For this purpose, we use the middle condition in (1), namely $u_{j1,ia}^pu_{kc,ia}^q+u_{j2,ia}^{s-p}u_{k\bar{c},ia}^{s-q}=0$. By multiplying by $u_{j1,ia}^*$ we obtain $u_{j1,ia}^*u_{j1,ia}^pu_{kc,ia}^q=0$. By multiplying by $u_{j1,ia}$ we obtain $u_{j1,ia}u_{j1,ia}^*u_{j1,ia}^pu_{kc,ia}^q=0$. But now, since $u_{j1,ia}$ is a partial isometry,  and for each partial isometry $v$ we have $vv^*v=v$, we can remove the $u_{j1,ia}u_{j1,ia}^*$ term, and we are left with $u_{j1,ia}^pu_{kc,ia}^q=0$, and this gives the result, since all terms in the sum $\sum_{ia}u_{jb,ia}^pu_{kc,ia}^q$ are now equal to $0$.

In the other direction now, $(2)\implies (1)$, all the assertions are clear as well: indeed, the third relations
in (1) can be obtained from the second relations in (2) by multiplying by $u_{jb,ia}$, known to be a partial
isometry, and the second relations in (1) can be obtained from the second relations in (2), for the words of type
$g_{j_1b_1}\ldots g_{j_sb_s}\in W_s-W_s'$.
\end{proof}

\section{Noncrossing partitions}

In this section we study the representation theory of $G^+(\widehat{\mathbb Z_s^{*n}})$. According to a well-known principle, going back to Woronowicz's  work in \cite{wo2}, most of the representation theory invariants are encoded in the structure of the spaces $Hom(u^{\otimes k},u^{\otimes l})$, which form a tensor category. So, it is this tensor category that we want to compute.

In addition, another well-known principle, once again going back to the considerations in \cite{wo2}, tells us
that we should look for an explicit basis for the elements of $Hom(u^{\otimes k},u^{\otimes l})$, indexed by
``diagrams''. This latter principle has been proved to be fruitful in a number of situations, see e.g. \cite{ez1},
\cite{bsk}, and we will prove that it applies once again here. Already in \cite{bsk} we noticed that due to the
fact that the quantum isometry groups we study have non-selfadjoint entries in the fundamental representations,
it is natural to consider diagrams with vertices of two different colors.

So, let us first construct a candidate for such a basis of $Hom(u^{\otimes k},u^{\otimes l})$.

\begin{definition}
We let $D_s(k,l)$ be the set of noncrossing partitions between $k$ upper points and $l$ lower points, with each
leg colored black or white, with the following rules:
\begin{enumerate}
\item In each block, the signed number of black legs equals the signed number of white legs, modulo $s$ (the sign being $+$ for an upper leg, and $-$ for a lower leg).

\item We agree to identify any colored partition with all the colored partitions obtained by  interchanging the two colors, independently in each block.
\end{enumerate}
\end{definition}

In this definition the parameter $s$ ranges in the set $\{1,2,3,\ldots,\infty\}$, with the usual convention that $a=b$ modulo $\infty$ means $a=b$. The condition in (1) means to have the equality $b_{up}-b_{down}=w_{up}-w_{down}$ modulo $s$, for each block of our colored partition, where $b_{up},b_{down},w_{up},w_{down}$ count the black/white, up/down legs of the block.

Observe that we have inclusions $D_\infty(k,l)\subset D_s(k,l)\subset D_1(k,l)$, for any $s$. More generally, we have an inclusion $D_s(k,l)\subset D_t(k,l)$, whenever $t|s$.

Here are a few examples of such sets of partitions.

\begin{proposition}
At $s=1,2,\infty$, the sets $D_s(k,l)$ are as follows:
\begin{enumerate}
\item $D_1(k,l)$ is the set of noncrossing partitions $NC(k,l)$, with the legs arbitrarily bicolored black or white, taken modulo the block-interchanging of colors.

\item $D_2(k,l)$ is the set of noncrossing partitions with even blocks $NC_{even}(k,l)$, with the legs arbitrarily bicolored black or white, modulo the block-interchanging of colors.

\item $D_\infty(k,l)$ is once again $NC_{even}(k,l)$, this time colored such that the signed numbers of black and white legs are the same, modulo the block-interchanging of colors.
\end{enumerate}
\end{proposition}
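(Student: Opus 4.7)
The proposition is essentially a direct unpacking of Definition 3.1 at the three special values $s=1,2,\infty$, so my plan is to verify each of the three cases separately by translating the signed-counting condition into the description in the statement. Since the color-interchange equivalence in (2) of the definition is identical in all three cases, the real content is the shape of the block condition (1) in each regime.

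For $s=1$, the plan is to observe that congruence modulo $1$ is vacuous, so the condition $b_{up}-b_{down}\equiv w_{up}-w_{down}\pmod{s}$ imposes nothing on a colored block. Hence every noncrossing partition with an arbitrary black/white coloring of the $k+l$ legs qualifies, yielding $NC(k,l)$ with arbitrary bicoloring (modulo block-interchange), as claimed.

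For $s=2$, the idea is that modulo $2$ the signs disappear, so the defining congruence becomes $b_{up}+b_{down}+w_{up}+w_{down}\equiv 0\pmod 2$. This is exactly the statement that the total block size is even, with no further constraint on how the colors are distributed. Thus $D_2(k,l)$ is precisely $NC_{even}(k,l)$ with an arbitrary bicoloring of the legs, taken modulo block-interchange of colors.

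For $s=\infty$, the congruence becomes the literal equality $b_{up}-b_{down}=w_{up}-w_{down}$, which is what appears in (3). The only extra thing to check is that the underlying partition is automatically in $NC_{even}(k,l)$; this follows by rewriting the equality as $b_{up}+w_{down}=b_{down}+w_{up}$ and noting that the block size $b_{up}+b_{down}+w_{up}+w_{down}$ then equals $2(b_{up}+w_{down})$. I do not anticipate a real obstacle here: the argument is pure bookkeeping on the four counters $b_{up},b_{down},w_{up},w_{down}$, and the only thing worth being careful about is that in the statement of (3) the coloring condition is quoted in an abbreviated form (``signed numbers of black and white legs are the same''), which should be recognized as a verbatim reading of the $s=\infty$ specialization of Definition 3.1(1).
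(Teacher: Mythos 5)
Your proposal is correct and takes essentially the same route as the paper, whose one-line proof likewise just unpacks Definition 3.1 via the observation that the four counters $b_{up},b_{down},w_{up},w_{down}$ satisfy the mod-$s$ condition and sum to the block size; your parity reduction at $s=2$ and the evenness check $b_{up}+b_{down}+w_{up}+w_{down}=2(b_{up}+w_{down})$ at $s=\infty$ are exactly the content of that remark. Nothing is missing.
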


\begin{proof}
This is clear from the fact that the numbers $b_{up},b_{down},w_{up},w_{down}$, which must satisfy $b_{up}-b_{down}=w_{up}-w_{down}$ modulo $s$, sum up to the size of the block.
\end{proof}

Consider now the Hilbert space $H=\mathbb C^{2n}$, with standard basis denoted $\{e_I\}=\{e_{ia}\}$. Associated to any partition $\pi\in D_s(k,l)$ is a linear map $T_\pi:H^{\otimes k}\to H^{\otimes l}$, as follows:
$$T_\pi(e_{I_1}\otimes\ldots\otimes e_{I_k})=\sum_{J_1\ldots J_l}\delta_\pi\begin{pmatrix}I_1&\ldots&I_k\\ J_1&\ldots&J_l\end{pmatrix}e_{J_1}\otimes\ldots\otimes e_{J_l}$$

Here the symbol $\delta_\pi\in\{0,1\}$ is defined as follows: we put the indices on the points of $\pi$, and we set $\delta_\pi=1$ when for each block there exists an index $I=ia$ such that all black indices are $I=ia$, and all white indices are $\bar{I}=i\bar{a}$. Otherwise, we set $\delta_\pi=0$.

\begin{proposition}
The application $\pi\to T_\pi$ transforms the horizontal concatenation, vertical concatenation and upside-down turning of colored diagrams into the tensor product, composition, and involution of linear maps (with a $2n$ factor for the circles).
\end{proposition}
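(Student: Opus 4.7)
The plan is to verify each of the three claimed functorial properties separately, using the explicit formula for $T_\pi$ from the definition. First I would check that $T_\pi$ is well-defined: the Kronecker symbol $\delta_\pi$ depends only on the class of $\pi$ modulo the block-interchange of Definition 3.1(2), because the condition ``all black legs of a block carry a common index $ia$, and all white legs of the same block carry $i\bar a$'' is symmetric under swapping the two colors within one block.

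For the horizontal concatenation, the blocks of $\pi\otimes\sigma$ are the disjoint union of those of $\pi$ and $\sigma$, so the index condition factorizes,
$$\delta_{\pi\otimes\sigma}\begin{pmatrix} I\,I' \\ J\,J' \end{pmatrix} = \delta_\pi\begin{pmatrix} I \\ J \end{pmatrix}\delta_\sigma\begin{pmatrix} I' \\ J' \end{pmatrix},$$
and substitution into the defining formula yields $T_{\pi\otimes\sigma}=T_\pi\otimes T_\sigma$. For the upside-down turning $\pi\mapsto\pi^*$, the matrix of $T_\pi$ has real $\{0,1\}$-entries, so $T_\pi^*$ equals its transpose, obtained by swapping input and output rows. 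On the partition side, flipping $\pi$ upside down preserves the block structure and the leg colors but reverses which row each leg inhabits; since the definition of $\delta$ references each leg only through its color and index value, one has $\delta_{\pi^*}\begin{pmatrix} J \\ I \end{pmatrix}=\delta_\pi\begin{pmatrix} I \\ J\end{pmatrix}$, which gives $T_{\pi^*}=T_\pi^*$. The signed-count condition of Definition 3.1(1) is also preserved under flipping, since it simply negates both sides of $b_{up}-b_{down}\equiv w_{up}-w_{down}\pmod s$.

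The main step, and the one I expect to require the most care, is the vertical concatenation $\pi\sigma$. Write
$$(T_\pi T_\sigma)(e_I) = \sum_{K,J} \delta_\sigma\begin{pmatrix} I \\ K \end{pmatrix}\delta_\pi\begin{pmatrix} K \\ J \end{pmatrix} e_J;$$
for fixed $I,J$ the coefficient of $e_J$ is a sum over middle-row indices $K$. The product of deltas is nonzero precisely when $K$ is compatible with both $\sigma$ and $\pi$, which amounts to compatibility with the stacked diagram before deletion of the middle row. Any component of that stacked diagram which contains at least one outer leg has its middle-row $K$-legs pinned to a single value determined by that outer leg, so after deletion of the middle row and the concomitant merging of blocks, these components contribute exactly $\delta_{\pi\sigma}\begin{pmatrix} I \\ J\end{pmatrix}$. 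Any component consisting entirely of middle-row legs is unconstrained by $I,J$; its common index parameter $ia$ ranges over the $2n$ elements of the basis of $H$, producing one factor of $2n$ for each such component, hence the total circle factor $(2n)^{c(\pi,\sigma)}$ in the identity $T_\pi T_\sigma = (2n)^{c(\pi,\sigma)} T_{\pi\sigma}$.

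The remaining technicality is verifying that the modular balance condition of Definition 3.1(1) survives vertical concatenation: when two blocks (one from $\pi$, one from $\sigma$) merge through the middle row, each middle leg is simultaneously a lower leg of the $\pi$-block and an upper leg of the $\sigma$-block, and so contributes with opposite signs to the two balance equations, with the result that after merger the signed counts of the combined block add up to $0\pmod s$. This confirms $\pi\sigma\in D_s$ and completes the verification of all three functorial properties.
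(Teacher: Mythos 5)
Your verification is correct and follows essentially the same route as the paper, which simply declares the tensor-product, composition and involution compatibilities ``clear from definitions'' and only spells out the value of a circle via the computation $T_\cup T_\cap(1)=2n$ for the $bw$-colored cap and cup. You merely carry out the direct check in more detail (well-definedness under block-wise color interchange, the pinning of middle indices, the factor $(2n)^{c(\pi,\sigma)}$ for closed middle components, and the preservation of the modulo-$s$ balance condition), all of which is consistent with the paper's intended argument.
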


\begin{proof}
All the assertions are clear from definitions, except perhaps for the last assertion, concerning the value of the circles. So, consider the partition $\cap$, with legs colored $bw$. This partition belongs to $D_s(0,2)$ for any $s$, and the associated linear map is by definition $T_\cap(1)=\sum_{ia}e_{ia}\otimes e_{i\bar{a}}$. Similarly, the partition $\cup$, with legs colored $bw$, belongs to $D_s(2,0)$ for any $s$, and the associated linear map is $T_\cup(e_{ia}\otimes e_{jb})=\delta_{ia,j\bar{b}}$. Now since we have $T_\cup T_\cap(1)=T_\cup(\sum_{ia}e_{ia}\otimes e_{i\bar{a}})=\#\{ia\}=2n$, this gives the result.
\end{proof}

In order to make the link with the quantum groups considered in the previous section, we will need a supplementary category of partitions, constructed in \cite{bsk}:

\begin{definition}
We let $\overline{D}_\infty(k,l)$ be the set of partitions in $NC_{even}(k,l)$, with all legs colored black or white, with the following rules:
\begin{enumerate}
\item In each block, the top and bottom legs either both follow the pattern $bwbw\ldots$, or both follow the pattern $wbwb\ldots$

\item We agree to identify any colored partition with all the colored partitions obtained by interchanging the two colors, independently in each block.
\end{enumerate}
\end{definition}

Once again, we can associate linear maps to such partitions, by the same formula as the one given before
Proposition 3.3, and an analogue of that proposition holds. See \cite{bsk}.

Observe that for any $s$ we have inclusions as follows:
$$\overline{D}_\infty(k,l)\subset D_\infty(k,l)\subset D_s(k,l)\subset D_1(k,l)$$

Here the first inclusion follows by comparing Definition 3.4 with Proposition 3.2 (3), and the other two inclusions were already noticed, before Proposition 3.2.

As we will see in the proof below, the above inclusions correspond, via Woronowicz's Tannakian duality \cite{wo2}, to inclusions between the quantum groups we are interested in.

\begin{theorem}
For the quantum group $G^+(\widehat{\mathbb Z_s^{*n}})$, with $5\leq s<\infty$ and $n\geq 2$, we have:
$$Hom(u^{\otimes k},u^{\otimes l})=span(T_\pi|\pi\in D_s(k,l))$$
In addition, the linear maps $T_\pi$ appearing on the right are linearly independent.
\end{theorem}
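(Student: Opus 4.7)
The plan is to apply Woronowicz's Tannakian duality \cite{wo2}, reducing the theorem to a matching of two tensor categories. Set $\mathcal C_s(k,l)=span(T_\pi|\pi\in D_s(k,l))$ acting on $H=\mathbb C^{2n}$. Since the defining block condition $b_{up}-b_{down}\equiv w_{up}-w_{down}\pmod s$ is preserved under horizontal and vertical concatenation and upside-down turning, Proposition 3.3 shows $\mathcal C_s$ is a concrete Tannakian category, and produces a compact quantum subgroup $G\subseteq U_{2n}^+$ with $Hom_G(u^{\otimes k},u^{\otimes l})=\mathcal C_s(k,l)$. The theorem then reduces to showing $G=G^+(\widehat{\mathbb Z_s^{*n}})$ together with linear independence of the generating diagrams.

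For $G\subseteq G^+(\widehat{\mathbb Z_s^{*n}})$, use the characterization from Theorem 2.5. First, the inclusion $\overline D_\infty\subset D_s$ of partition sets means $Hom_G\supseteq Hom_{H_{n0}^+}$, and hence $G\subseteq H_{n0}^+$ by \cite{bsk}. Next, let $\beta_s\in D_s(0,s)$ be the partition with all $s$ lower legs in a single all-black block; the associated map is $T_{\beta_s}(1)=\sum_{ia}e_{ia}^{\otimes s}=\xi$, and $T_{\beta_s}\in\mathcal C_s(0,s)=Hom_G(1,u^{\otimes s})$ says precisely that $\xi\in Fix_G(u^{\otimes s})$. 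Theorem 2.5 then forces $G\subseteq G^+(\widehat{\mathbb Z_s^{*n}})$.

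For the reverse inclusion, I must exhibit each $T_\pi$ with $\pi\in D_s$ as an intertwiner for the fundamental corepresentation of $G^+(\widehat{\mathbb Z_s^{*n}})$. Since intertwiner spaces form a Tannakian category and Proposition 3.3 allows one to build partitions from smaller pieces via tensor, composition and upside-down turning, it is enough to show that $D_s$ is generated as such a category by $\overline D_\infty$ together with $\beta_s$; the intertwining property then propagates from \cite{bsk} (handling $\overline D_\infty$ via $G^+(\widehat{\mathbb Z_s^{*n}})\subseteq H_{n0}^+$) and from Theorem 2.5 (handling $\beta_s$). This combinatorial generation claim is the main obstacle of the proof; I would argue it by induction on the total number of legs, isolating an innermost block $B$ of $\pi$ through $\overline D_\infty$-identity strands and then realizing $B$ as a vertical composition of suitably rotated copies of $\beta_s$ capped with $\overline D_\infty$-partitions, where the capping adjusts the four colored-leg counts of $B$ to the prescribed values modulo $s$. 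The hypothesis $s\geq 5$ enters here to rule out the low-$s$ coincidences flagged after Proposition 2.1.

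Linear independence of $\{T_\pi|\pi\in D_s(k,l)\}$ reduces to the classical noncrossing analogue: after fixing representatives for the coloring equivalence in Definition 3.1(2), the set $D_s(k,l)$ injects into $NC(k,l)$, and the Gram matrix of $\{T_\pi|\pi\in NC(k,l)\}$ on $\mathbb C^d$ is known to be invertible for $d\geq 2$ via the standard Kreweras-triangular argument; applied here with $d=2n\geq 4$, the restriction to the $D_s$-representatives remains invertible, yielding the claim.
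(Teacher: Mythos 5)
Your overall architecture is the same as the paper's: by Tannakian duality the theorem reduces to the categorical generation statement $D_s=\langle\overline{D}_\infty,1_s\rangle$ (your $\beta_s$ is the paper's $1_s$), plus linear independence. The problem is that both of the points you leave to a sketch are exactly where the real work lies, and as written neither goes through. The generation claim is the heart of the proof: the paper spends three steps on it, first manufacturing auxiliary one-block partitions (``forks'' and ``combs'') from $1_s$ by explicit cappings, then proving $D_\infty\subset\langle\overline{D}_\infty,1_s\rangle$ by induction on the number of legs of a one-block partition with a case analysis on the colors of the last three legs, and only then passing from $D_\infty$ to $D_s$ by padding a block whose black/white counts agree merely modulo $s$ with copies of the all-black $1_s$ so as to land back in $D_\infty$. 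Your plan (``isolate an innermost block and realize it as a composition of rotated copies of $\beta_s$ capped with $\overline{D}_\infty$-partitions'') does not explain how an arbitrary admissible coloring of a single block is produced, nor does it separate the mod-$s$ reduction from the $D_\infty$ case; saying this is ``the main obstacle'' and stopping there leaves the theorem unproved.

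The linear independence argument is also incorrect. The set $D_s(k,l)$ does \emph{not} inject into $NC(k,l)$ after fixing representatives: a single underlying noncrossing partition typically carries many inequivalent admissible colorings (already a block of size $2s$ does), and distinct colorings give genuinely different maps $T_\pi$, since $\delta_\pi$ depends on the colors through the constraint that black indices equal $ia$ and white indices equal $i\bar a$. So the relevant Gram matrix is the one indexed by \emph{colored} partitions, not by $NC(k,l)$, and its invertibility is not a standard fact — the paper explicitly flags the computation of this Gram determinant for $s\geq 3$ as an open problem. (Moreover, even in the uncolored case the maps $T_\pi$, $\pi\in NC$, on $\mathbb C^d$ are linearly independent only for $d\geq 4$, not $d\geq 2$ as you assert, though with $d=2n\geq 4$ this particular slip would be harmless.) The paper instead proves independence indirectly: $D_s(k,l)\subset D_1(k,l)$, and for $D_1$ one knows from \cite{bsk} that the corresponding quantum group $S_{n0}^+$ is isomorphic to $H_n^+$ in its sudoku representation, so the dimension of $Hom(u^{\otimes k},u^{\otimes l})$ is known from \cite{bbc} and coincides with the count of diagrams in $D_1(k,l)$; this dimension count is what forces the $T_\pi$ to be independent. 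You need either that argument or an actual proof of nondegeneracy of the colored Gram matrix.
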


\begin{proof}
Let us first mention the fact that the result holds as well at $s=3$. Regarding this case, or more generally all the special cases ($s=2,3,4,\infty$) we refer to section 6 below.

So, let $G$ be the quantum isometry group in the statement.

{\bf Step 1.} We first find a purely combinatorial formulation of the problem.

We know from Theorem 2.5 that $G\subset H_{n0}^+$ appears as the subgroup presented by the relations $\xi\in Fix(u^{\otimes k})$, where $\xi=\sum_{ia}e_{ia}^{\otimes s}$.  We also know from \cite{bsk} that the category of partitions for $H_{n0}^+$ is the above category $\overline{D}_\infty$. Now since we have $\xi=T_{1_s}$, where $1_s$ is the 1-block partition having $s$ legs, we conclude that the category of partitions for $G$ is:
$$D_s'=<\overline{D}_\infty,1_s>$$

Summarizing, we have to prove that we have $D_s=D_s'$. Since the inclusion $D_s'\subset D_s$ is clear from definitions, we are left with proving the inclusion $D_s\subset D_s'$.

{\bf Step 2.} We construct some useful elements in $D_s'=<\overline{D}_\infty,1_s>$.

Consider the 1-block partition $1_s\in NC(0,s)$, with all legs colored black.
By capping it at right, from below, with the partition ${\raisebox{10pt} {\xymatrix@R=0.2pc @C=0.2pc{ *=0{\bullet}  & & *=0{\circ}
\\ & *=0{} \ar@{-}[ur]  \ar@{-}[ul]  & }}  }$, we obtain a 1-block partition in $NC(1,s-1)$, with the upper leg white, and all the lower legs black (in the example $s=5$):
\[ {\raisebox{10pt} {\xymatrix@R=0.4pc @C=0.4pc{  & & *=0{} & &
\\ *=0{\bullet} \ar@{-}[urr] & *=0{\bullet} \ar@{-}[ur] &  *=0{\bullet} \ar@{-}[u]  & *=0{\bullet} \ar@{-}[ul] & *=0{\bullet} \ar@{-}[ull]}}}
\;\;\;\;\;
{\raisebox{0pt} {\xymatrix@R=0.3pc @C=0.3pc{ *=0{\bullet}  & & *=0{\circ}
\\ & *=0{} \ar@{-}[ur]  \ar@{-}[ul]  & }}  }
 =
  {\raisebox{10pt}
 {\xymatrix@R=0.4pc @C=0.8pc{  & & *=0{\circ} & & \\  & & & &
\\ *=0{\bullet} \ar@{-}[uurr] & *=0{\bullet} \ar@{-}[uur] &    & *=0{\bullet} \ar@{-}[uul] & *=0{\bullet} \ar@{-}[uull]}}}
\]
We can repeat this process, and for any $x\in\{1,\ldots,s\}$ we obtain a partition in $NC(x,s-x)$, having all upper legs white, and all the lower legs black. We call these partitions ``forks''.

Consider now once again the 1-block partition $1_s\in NC(0,s)$, with all legs colored black. By capping it from
below in the center with a `colour inverted' fork in $NC(s-2,2)$ (so for $s=5$ it is  ${\raisebox{10pt} {\xymatrix@R=0.2pc @C=0.2pc{*=0{\bullet} & & *=0{\bullet}  & & *=0{\bullet}
\\ &  *=0{\circ} \ar@{-}[ur]  \ar@{-}[ul]&  & *=0{\circ} \ar@{-}[ur]  \ar@{-}[ul]  &  }}  }$) we obtain the partition ${\raisebox{10pt} {\xymatrix@R=0.3pc @C=0.3pc{  & & *=0{} & &
\\ *=0{\bullet} \ar@{-}[urr] & *=0{\circ} \ar@{-}[ur] & & *=0{\circ} \ar@{-}[ul]  & *=0{\bullet} \ar@{-}[ull] }}}$, which we call a ``comb''.

\begin{comment}

 at the various places where we can do it, with the various forks that are available (remembering we can swap colours in each block), we obtain a number of
1-block partitions. An example is given below:

Repeating such operations we can in particular obtain all 1-block partitions in $NC(2p+2q)$
colored $w^pb^{p+q}w^q$, with $p,q\in\mathbb N$ arbitrary. We will call these 1-block partitions ``combs''.
\end{comment}

{\bf Step 3.} We prove first that we have $D_\infty\subset D_s'$.

Since $D_\infty$ is generated by its blocks, it is enough to show that any 1-block partition in $D_\infty$ belongs to $D_s'$. Also, by Frobenius duality, which diagramatically corresponds to rotating the legs, we can restrict attention to the partitions having no upper legs. So, what we want to prove is that any 1-block partition $\pi\in D_\infty(0,k)$ is in $D_s'$.

Now recall the definition of $D_\infty$: these are the partitions such that in each block, the signed number of
black legs equals the signed number of white legs. Now since our partition $\pi$ has just one block, and no upper
legs, the condition $\pi\in D_\infty(0,k)$ simply tells us that the number of black legs should equal the number
of white legs.

We proceed by recurrence on the number of legs (note that if $k=2$ then $\pi \in \overline{D}_{\infty}$, so we
can start from $k=4$). First, by using a rotation, we can assume that the last 2 legs at right have different
colors, say $bw$. Depending now on the color of the leg preceding these 2 last legs, we have two cases:

(1) Case $\pi=Rbbw$, with $R$ word in $b,w$. In this case we put at right of $\pi'=Rb$, supposed by recurrence to be in $D_s'$, the colour inverted comb ${\raisebox{10pt} {\xymatrix@R=0.2pc @C=0.2pc{  & & *=0{} & &
\\ *=0{\circ} \ar@{-}[urr] & *=0{\bullet} \ar@{-}[ur] & & *=0{\bullet} \ar@{-}[ul]  & *=0{\circ} \ar@{-}[ull] }}}$, which is as well in $D_s'$ by Step 2, and we cap in the middle with ${\raisebox{10pt} {\xymatrix@R=0.2pc @C=0.2pc{ *=0{\bullet}  & & *=0{\circ}
\\ & *=0{} \ar@{-}[ur]  \ar@{-}[ul]  & }}  }$ from below. We obtain $\pi$, and we are done.

(2) Case $\pi=Rwbw$, with $R$ word in $b,w$. In this case we put at right of $\pi'=Rw$, supposed by recurrence to be in $D_s'$, the partition  ${\raisebox{10pt} {\xymatrix@R=0.2pc @C=0.2pc{  & & *=0{} & &
\\ *=0{\bullet} \ar@{-}[urr] & *=0{\circ} \ar@{-}[ur] & & *=0{\bullet} \ar@{-}[ul]  & *=0{\circ} \ar@{-}[ull] }}}$, which is in $\overline{D}_\infty\subset D_s'$, and we cap in the middle with ${\raisebox{10pt} {\xymatrix@R=0.2pc @C=0.2pc{ *=0{\circ}  & & *=0{\bullet}
\\ & *=0{} \ar@{-}[ur]  \ar@{-}[ul]  & }}  }$ from below. We obtain once again $\pi$, and we are done.

{\bf Step 4.} We prove now that we have indeed $D_s\subset D_s'$.

According to the result in Step 3, it suffices to show that we have $D_s\subset <D_\infty,1_s>$. In order to do this, we proceed as in Step 3: since $D_s$ is generated by its blocks, and by Frobenius duality, it is enough to check that any 1-block partition $\pi\in D_\infty(0,k)$ is in $<D_\infty,1_s>$. That is, we have to show that any 1-block partition $\pi\in D_\infty(0,k)$ whose number of black legs equals the number of white legs modulo $s$ is in $<D_\infty,1_s>$.

\begin{comment}
For this purpose, we use a generalization of the ``forks''. Recall from Step 2 that a fork is a partition in $NC(x,s-x)$, having all upper legs white, and all the lower legs black. We can extend each such fork with a number of copies of $1_s$, and we obtain in this way all the partitions in $NC(x,y)$, with $x,y$ subject to $s|(x+y)$ only, having all upper legs white, and all the lower legs black. We call these latter partitions ``mega-forks''.

Observe that all the mega-forks belong to the category $<D_\infty,1_s>$.

\end{comment}

Let then  $\pi\in D_s$ be a 1-block element, say in $NC(2p+ks)$, where $p,k \in \mathbb N$ and $p$ denotes the number of black legs in $\pi$. It suffices to observe that it can be always realised as a capping of a 1-block partition $\pi'\in D_{\infty}\cap NC(2p+2ks)$, given by $\pi$ followed by $ks$ black legs, by $k$ copies of our partition $1_s$ with all black legs from below and on the right. A simple example is given below:
\[
  {\raisebox{10pt}
 {\xymatrix@R=0.4pc @C=0.8pc{  & & &  *=0{} & & && \\   & & & & & &
\\ *=0{\circ} \ar@{-}[uurrr] & *=0{\bullet} \ar@{-}[uurr] &   *=0{\circ} \ar@{-}[uur]   &  *=0{\circ} \ar@{-}[uu] & *=0{\circ} \ar@{-}[uul] & *=0{\circ} \ar@{-}[uull]   & *=0{\circ} \ar@{-}[uulll]  }}}
=
 {\raisebox{10pt}
 {\xymatrix@R=0.4pc @C=0.8pc{  & & & & & &  *=0{}  & & & & & &\\  & & & & & & & & & & & &
\\ *=0{\circ} \ar@{-}[uurrrrrr] & *=0{\bullet} \ar@{-}[uurrrrr] &   *=0{\circ} \ar@{-}[uurrrr]   &  *=0{\circ} \ar@{-}[uurrr] & *=0{\circ} \ar@{-}[uurr] & *=0{\circ} \ar@{-}[uur]  &  & *=0{\circ} \ar@{-}[uul]  &   *=0{\bullet} \ar@{-}[uull] &  *=0{\bullet} \ar@{-}[uulll]  &  *=0{\bullet} \ar@{-}[uullll]  &  *=0{\bullet} \ar@{-}[uulllll]  &  *=0{\bullet} \ar@{-}[uullllll]  }}}
\;\;\;\;
{\raisebox{-10pt} {\xymatrix@R=0.4pc @C=0.4pc{ *=0{\bullet}  & *=0{\bullet}  & *=0{\bullet} & *=0{\bullet} & *=0{\bullet}
\\ & &  *=0{} \ar@{-}[urr] \ar@{-}[u]  \ar@{-}[ull] \ar@{-}[ur]  \ar@{-}[ul] & &  }}  }
\]

{\bf Step 5.} It remains to prove that the maps $T_\pi$ are linearly independent.

The linear independence is a well-known issue, and can be solved by using a standard trick, namely a positivity argument involving a trace, going back to Jones' paper \cite{jon}. The idea here is to solve first the problem in the case $k=l$, where the span of the diagrams has a natural structure of $C^*$-algebra, and then to use Frobenius duality and the natural embeddings $D_s(k,l)\to D_s(k,l+s)$, in order to get the result for any $k,l$.

However, in our case we can simply use some previously known results, as follows.

Since we have $D_s(k,l)\subset D_1(k,l)$, our problem is actually about the diagrams in $D_1$, namely the partitions in $NC$, bicolored, modulo the block-interchanging of colors.

So, let us recall from \cite{bsk} that if $S_{n0}^+\subset H_{n0}^+$ denotes the quantum group presented by the relations making all the standard generators $u_{ia,jb}$ projections, then we have $Hom(u^{\otimes k},u^{\otimes l})=span(T_\pi|\pi\in D_1(k,l))$. We refer as well to \cite{bsk} for the precise relation between $S_{n0}^+$ and the quantum permutation group $S_n^+$, constructed by Wang in \cite{wa2}.

Let us also  recall from \cite{bsk} that we have an isomorphism $S_{n0}^+\simeq H_n^+$, where $H_n^+$ is the hyperoctahedral quantum group constructed in \cite{bbc}, taken with its sudoku representation.

With these results in hand, the linear independence is clear: indeed, the dimension of $Hom(u^{\otimes k},u^{\otimes l})$ is known from \cite{bbc}, and since a basic diagram count, performed in \cite{bsk}, shows that this is exactly the number of diagrams in $D_1(k,l)$, this gives the result.
\end{proof}

As a first comment, there should be as well a third proof for the linear independence, just by showing that the Gram determinant of the linear maps $T_\pi$ is nonzero. Indeed, two key ``Temperley-Lieb'' determinants were computed in \cite{dgg}, \cite{tut}, and, according to the results in \cite{bcu}, these computations can be usually extended to objects of type $D_1$.

Note however that the computation of the Gram determinant for $D_s$ with $s\geq 3$ is probably quite a difficult
task, because the size of the matrix, which will be shown to be a moment of a certain compound Poisson law, is a
quite complicated combinatorial object. So, computing this Gram determinant is a question that we would like to
raise here.

Finally, let us mention that these Gram determinant questions are of particular interest in relation with the Weingarten formula \cite{csn}. See \cite{ez2} for some potential applications.

\section{Poisson laws}

In the remainder of this paper we present a concrete application of Theorem 3.5, namely the computation of the Kesten measure of the discrete quantum group $\Lambda=\widehat{G}$, where $G=G^+(\widehat{\mathbb Z_s^{*n}})$. This measure is by definition the spectral measure of the character of the fundamental representation, $\chi=\Sigma u_{ii}$, with respect to the Haar functional.

For the moment, let us start with some probabilistic preliminaries. It is known from \cite{bb+}, \cite{ez1}, \cite{ez2}, \cite{bsk} that what we can expect to find as Kesten measures for our quantum groups should be some versions of the free Poisson law. So, in this section we will study in detail the free Poisson law, and its ``compound'' versions, first constructed in \cite{hpe}, \cite{sp2}.

\begin{definition}
The Poisson law $p_t$ and the free Poisson law $\pi_t$ are the real probability measures appearing when performing a Poisson limit, respectively a free Poisson limit
$$p_t=\lim_{n\to\infty}\left(\left(1-\frac{1}{n}\right)\delta_0+\frac{1}{n}\delta_t\right)^{* n}\quad
\pi_t=\lim_{n\to\infty}\left(\left(1-\frac{1}{n}\right)\delta_0+\frac{1}{n}\delta_t\right)^{\boxplus n}$$
where $*$ is the usual convolution of measures, and $\boxplus$ is Voiculescu's free convolution \cite{voi}.
\end{definition}

A well-known computation shows that at $t=1$ the Poisson law is $p_1=\frac{1}{e}\sum_{k=1}^\infty\frac{\delta_k}{k!}$, and that the free Poisson law is $\pi_1=\frac{1}{2\pi}\sqrt{1-4x^{-1}}\,dx$. Note that $\pi_1$ is the same as the Marchenko-Pastur law, discovered in a slightly different context in \cite{mpa}. See \cite{nsp}.

More generally, let $\rho$ be a compactly supported positive measure on $\mathbb{R}$. By replacing in the above formulae $\delta_t$ by $\rho$, we obtain measures called compound Poisson/free Poisson laws.

\begin{definition}
Associated to any compactly supported positive measure $\rho$ on $\mathbb{R}$ are the probability measures
$$p_\rho=\lim_{n\to\infty}\left(\left(1-\frac{c}{n}\right)\delta_0+\frac{1}{n}\rho\right)^{* n}\quad
\pi_\rho=\lim_{n\to\infty}\left(\left(1-\frac{c}{n}\right)\delta_0+\frac{1}{n}\rho\right)^{\boxplus n}$$
where $c=mass(\rho)$, called compound Poisson and compound free Poisson laws.
\end{definition}

In what follows we will be interested in the case where $\rho$ is discrete, as is for instance the case for $\rho=\delta_t$ with $t>0$, which produces the Poisson and free Poisson laws.

We recall that the usual convolution operation $*$ is linearized by $\log F$, where $F$ is the Fourier transform. There are several ways of defining and normalizing $F$, and for the purposes of this paper, we will take as definition $F_{\delta_z}(y)=e^{-iyz}$, on the Dirac masses.

We recall also from Voiculescu \cite{voi} that the free convolution operation $\boxplus$ is linearized by the $R$-transform, constructed as follows: first, we let $f(y)=1+m_1y+m_2y^2+\ldots$ be the moment generating function of our measure, so that $G(\xi)=\xi^{-1}f(\xi^{-1})$ is the Cauchy transform; then we set $R(y)=K(y)-y^{-1}$, where $K(y)$ is such that $G(K(y))=y$.

The following result allows one to detect compound Poisson/free Poisson laws.

\begin{lemma}
For $\rho=\sum_{i=1}^sc_i\delta_{z_i}$ with $c_i>0$ and $z_i\in\mathbb R$, we have
$$F_{p_\rho}(y)=\exp\left(\sum_{i=1}^sc_i(e^{-iyz_i}-1)\right)\quad R_{\pi_\rho}(y)=\sum_{i=1}^s\frac{c_iz_i}{1-yz_i}$$
where $F,R$ denote respectively the Fourier transform, and Voiculescu's $R$-transform.
\end{lemma}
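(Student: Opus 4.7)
The plan is to verify each formula by computing the relevant transform on the single factor $\mu_n = (1-c/n)\delta_0 + (1/n)\rho$, then invoking the fact that the Fourier transform is multiplicative under $*$ while the $R$-transform is additive under $\boxplus$, and finally passing to the limit $n\to\infty$ in Definition 4.2.

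For the classical part, the Fourier transform on Dirac masses immediately gives
$$F_{\mu_n}(y) = \left(1-\frac{c}{n}\right) + \frac{1}{n}\sum_{i=1}^{s}c_i e^{-iyz_i} = 1 + \frac{1}{n}\sum_{i=1}^{s}c_i(e^{-iyz_i}-1),$$
using $c=\sum_i c_i$, and then $F_{p_\rho}(y)=\lim_n F_{\mu_n}(y)^n$ combined with the elementary limit $(1+x/n)^n\to e^x$ produces the first formula.

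For the free part I would proceed analogously with the Cauchy transform
$$G_{\mu_n}(\xi)=\frac{1-c/n}{\xi}+\frac{1}{n}\sum_{i=1}^{s}\frac{c_i}{\xi-z_i}.$$
Since $\mu_n\to\delta_0$, the $K$-transform admits an expansion $K_{\mu_n}(y)=y^{-1}+\epsilon\phi(y)+O(\epsilon^2)$ with $\epsilon=1/n$. Plugging into $G_{\mu_n}(K_{\mu_n}(y))=y$ and matching first-order terms in $\epsilon$ yields the equation $y\phi(y)+c=\sum_i c_i/(1-yz_i)$, which after subtracting $c=\sum_i c_i$ simplifies to $\phi(y)=\sum_i c_iz_i/(1-yz_i)$. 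Additivity of the $R$-transform then gives $R_{\mu_n^{\boxplus n}}(y)=nR_{\mu_n}(y)=n\epsilon\phi(y)+O(\epsilon)\to\phi(y)$, yielding the second formula.

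As a cross-check, one could equivalently compute the free cumulants: since $m_k(\mu_n)=(1/n)\sum_i c_iz_i^k$ is $O(1/n)$ and every non-crossing partition other than the single-block one contributes a product of at least two cumulants of order $1/n$, Speicher's moment–cumulant inversion gives $\kappa_k(\mu_n)=m_k(\mu_n)+O(1/n^2)$. Additivity under $\boxplus$ then forces $\kappa_k(\pi_\rho)=\sum_i c_iz_i^k$, and summing the geometric series $R_{\pi_\rho}(y)=\sum_{k\geq 1}\kappa_k(\pi_\rho)y^{k-1}$ recovers the same closed form. The only subtlety in either route is bookkeeping the normalizations for $F$, $G$ and $R$; no serious obstacle arises.
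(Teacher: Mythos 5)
Your proposal is correct and follows essentially the same route as the paper: both halves compute the relevant transform of the single factor $\mu_n=(1-c/n)\delta_0+\frac{1}{n}\rho$ and pass to the limit, using multiplicativity of $F$ under $*$ and additivity of $R$ under $\boxplus$, the only difference being that the paper substitutes $K_{\mu_n}(y)=y^{-1}+R/n$ with $R=R_{\mu_n^{\boxplus n}}(y)$ into the exact equation $G_{\mu_n}(K_{\mu_n}(y))=y$ and lets $n\to\infty$, whereas you expand $K_{\mu_n}$ to first order in $1/n$ and match coefficients. Your free-cumulant cross-check is a legitimate alternative derivation, but the main argument coincides with the paper's.
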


\begin{proof}
Let $\mu_n$ be the measure appearing in Definition 4.2, under the convolution signs. In the classical case, we have the following well-known computation:
\begin{eqnarray*}
F_{\mu_n}(y)=\left(1-\frac{c}{n}\right)+\frac{1}{n}\sum_{i=1}^sc_ie^{-iyz_i}
&\implies&F_{\mu_n^{*n}}(y)=\left(\left(1-\frac{c}{n}\right)+\frac{1}{n}\sum_{i=1}^sc_ie^{-iyz_i}\right)^n\\
&\implies&F_{p_\rho}(y)=\exp\left(\sum_{i=1}^sc_i(e^{-iyz_i}-1)\right)
\end{eqnarray*}

In the free case now, we use a similar method. First, we have:
\begin{eqnarray*}
f_{\mu_n}(y)=\left(1-\frac{c}{n}\right)+\frac{1}{n}\sum_{i=1}^s\frac{c_i}{1-z_iy}
&\implies&G_{\mu_n}(\xi)=\left(1-\frac{c}{n}\right)\frac{1}{\xi}+\frac{1}{n}\sum_{i=1}^s\frac{c_i}{\xi-z_i}\\
&\implies&y=\left(1-\frac{c}{n}\right)\frac{1}{K_{\mu_n}(y)}+\frac{1}{n}\sum_{i=1}^s\frac{c_i}{K_{\mu_n}(y)-z_i}
\end{eqnarray*}

Now since $K_{\mu_n}(y)=y^{-1}+R_{\mu_n}(y)=y^{-1}+R/n$, where $R=R_{\mu_n^{\boxplus n}}(y)$, we get:
\begin{eqnarray*}
&&y=\left(1-\frac{c}{n}\right)\frac{1}{y^{-1}+R/n}+\frac{1}{n}\sum_{i=1}^s\frac{c_i}{y^{-1}+R/n-z_i}\\
&\implies&1=\left(1-\frac{c}{n}\right)\frac{1}{1+yR/n}+\frac{1}{n}\sum_{i=1}^s\frac{c_i}{1+yR/n-yz_i}
\end{eqnarray*}

Now multiplying by $n$, rearranging the terms, and letting $n\to\infty$, we get:
\begin{eqnarray*}
\frac{c+yR}{1+yR/n}=\sum_{i=1}^s\frac{c_i}{1+yR/n-yz_i}
&\implies&c+yR_{\pi_\rho}(y)=\sum_{i=1}^s\frac{c_i}{1-yz_i}\\
&\implies&R_{\pi_\rho}(y)=\sum_{i=1}^s\frac{c_iz_i}{1-yz_i}
\end{eqnarray*}

This finishes the proof in the free case, and we are done.
\end{proof}

We have as well the following result, which provides an alternative to Definition 4.2.

\begin{theorem} \label{PoissonR}
For $\rho=\sum_{i=1}^sc_i\delta_{z_i}$ with $c_i>0$ and $z_i\in\mathbb R$, we have
$$p_\rho/\pi_\rho={\rm law}\left(\sum_{i=1}^sz_i\alpha_i\right)$$
where the variables $\alpha_i$ are Poisson/free Poisson$(c_i)$, independent/free.
\end{theorem}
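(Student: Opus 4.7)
The plan is to identify the distribution of $\sum_{i=1}^s z_i\alpha_i$ with $p_\rho$ (respectively $\pi_\rho$) by matching Fourier transforms (respectively $R$-transforms) against the explicit formulas established in Lemma 4.3. Thus the whole argument reduces to two short computations, one classical and one free, both exploiting the basic linearization properties of $F$ under independent sums and of $R$ under free sums.

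For the classical half, I would first note that a Poisson$(c)$ variable has Fourier transform $F(y)=\exp(c(e^{-iy}-1))$ — this is Lemma 4.3 in the particular case $\rho=c\delta_1$. Combined with the scaling rule $F_{zX}(y)=F_X(zy)$ and independence of the $\alpha_i$ (which makes $F$ multiplicative over the sum), this yields
$$F_{\sum_i z_i\alpha_i}(y)=\prod_{i=1}^s\exp\bigl(c_i(e^{-iyz_i}-1)\bigr)=\exp\Bigl(\sum_{i=1}^sc_i(e^{-iyz_i}-1)\Bigr),$$
which is exactly the formula for $F_{p_\rho}$ from Lemma 4.3. For the free half, Lemma 4.3 applied to $\rho=c_i\delta_1$ says that a free Poisson$(c_i)$ variable has $R$-transform $c_i/(1-y)$. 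The scaling identity $R_{zX}(y)=zR_X(zy)$, which is immediate from $G_{zX}(\xi)=z^{-1}G_X(z^{-1}\xi)$, gives $R_{z_i\alpha_i}(y)=c_iz_i/(1-z_iy)$. Since the $\alpha_i$ are free and $R$ is additive on free sums (Voiculescu \cite{voi}), we obtain
$$R_{\sum_i z_i\alpha_i}(y)=\sum_{i=1}^s\frac{c_iz_i}{1-z_iy},$$
which matches $R_{\pi_\rho}$ from Lemma 4.3.

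Because a compactly supported real probability measure is uniquely determined by its Fourier transform (in the classical case) or by its $R$-transform (in the free case), the two identifications above establish both statements of the theorem. There is no real obstacle here: the only small thing to verify is that the variables on the right are bounded — or at least compactly supported so that $R_{\sum z_i\alpha_i}$ makes sense as the formal power series matching $R_{\pi_\rho}$ — and this follows from the compact support of $\rho$ together with the fact that the (free) Poisson laws $p_{c_i}$, $\pi_{c_i}$ are compactly supported. The argument thus amounts to a repackaging of Lemma 4.3 using the standard scaling and additivity rules for $F$ and $R$.
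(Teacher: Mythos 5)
Your proposal is correct and follows essentially the same route as the paper: both arguments compute the Fourier transform of $\sum_i z_i\alpha_i$ via the scaling rule and multiplicativity over independent sums, and the $R$-transform via the dilation formula $R_{aX}(y)=aR_X(ay)$ and additivity over free sums, then match the results with the formulae of Lemma 4.3. Your added remarks on compact support and on the determinacy of the law by its transform only make explicit what the paper leaves implicit.
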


\begin{proof}
Observe first that the result holds for $\rho=\delta_t$, with $t>0$. Also, it was shown in \cite{bb+} that an analogue of this result holds for $\rho$ = uniform measure on the $s$-th roots of unity (note however that the latter measure is not supported on $\mathbb R$).

In the general case, let $\alpha$ be the sum of Poisson/free Poisson variables in the statement. We will show that the Fourier/$R$-transform of $\alpha$ is given by the formulae in Lemma 4.3.

Indeed, by using some well-known Fourier transform formulae, we have:
\begin{eqnarray*}
F_{\alpha_i}(y)=\exp(c_i(e^{-iy}-1))
&\implies&F_{z_i\alpha_i}(y)=\exp(c_i(e^{-iyz_i}-1))\\
&\implies&F_\alpha(y)=\exp\left(\sum_{i=1}^sc_i(e^{-iyz_i}-1)\right)
\end{eqnarray*}

Also, by using some well-known $R$-transform formulae, we have:
\begin{eqnarray*}
R_{\alpha_i}(y)=\frac{c_i}{1-y}
&\implies&R_{z_i\alpha_i}(y)=\frac{c_iz_i}{1-yz_i}\\
&\implies&R_\alpha(y)=\sum_{i=1}^s\frac{c_iz_i}{1-yz_i}
\end{eqnarray*}

Thus we have indeed the same formulae as those in Lemma 4.3, and we are done.
\end{proof}

\begin{remark}
It is tempting to replace in the above results compactly supported positive measures on $\mathbb R$ by their counterparts supported on $\mathbb C$. It is easy to see that the classical parts pass through without any modifications; it is the free case that offers a challenge. The free convolution is defined in terms of the distribution of the sum of two free noncommutative random variables. If the variables in question are selfadjoint (which corresponds to the measures being supported on $\mathbb R$), then so is their sum. There is however no reason why the sum of normal operators should be normal, so, although the formal computations do not change, it is not clear how they should be interpreted. We thank the referee for pointing this out.
\end{remark}

\section{Spectral measures}

Let us go back now to our quantum group problematics. We recall from the beginning of section 3 that most of the representation theory invariants of a quantum group $(G,u)$ are known to be encoded in the structure of the linear spaces $Hom(u^{\otimes k},u^{\otimes l})$.

Here $u$ denotes the fundamental corepresentation of $C(G)$, and we assume that we have $u=\bar{u}$ as corepresentations. Note that this is true for the quantum group $H_{n0}^+$ from \cite{bsk}, \cite{bhs}, hence is true as well for the quantum groups investigated in this paper.

The main representation theory invariant is the dimension of the above spaces. By Woronowicz's general results in \cite{wo1} we have $\dim(Hom(u^{\otimes k},u^{\otimes l}))=h(\chi^{k+l})$, where $\chi=\Sigma u_{ii}$ is the character of $u$, and where $h:C(G)\to\mathbb C$ is the Haar functional.

So, the main problem is to compute the moments of $\chi$. For several reasons, it is actually more convenient to specify directly the law of $\chi$.

\begin{definition}
The spectral measure of a compact quantum group $(G,u)$ with $u=\bar{u}$ is the real probability measure $\mu$ given by $\int\varphi(x)\,d\mu(x)=h(\varphi(\chi))$, where $\chi=\Sigma u_{ii}$ is the character of the fundamental representation, and $h:C(G)\to\mathbb C$ is the Haar functional.
\end{definition}

For a number of other interpretations of this key measure, and a number of motivations for its explicit computation, we refer for instance to \cite{ban}, \cite{ez1}.

We are now in position to state and prove our main result. Let $\rho\to\underline{\rho}$ the projection map from
complex to real measures, given on Dirac masses by $\delta_z\to\delta_{Re(z)}$.

\begin{theorem}
The spectral measure of $G^+(\widehat{\mathbb Z_s^{*n}})$ with $5\leq s<\infty$ and $n\geq 2$ is given by
$$law(\chi/2)=\pi_{\underline{\varepsilon}/2}$$
where $\varepsilon$ is the uniform measure on the $s$-roots of unity.
\end{theorem}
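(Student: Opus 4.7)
The plan is to match moments. By Woronowicz's Peter-Weyl theory, the $N$-th moment of $\chi$ equals $\dim \mathrm{Fix}(u^{\otimes N}) = \dim \mathrm{Hom}(1, u^{\otimes N})$, and by Theorem 3.5 together with the linear independence of the $T_\pi$, this dimension is exactly $|D_s(0,N)|$. So I need to show
\[
|D_s(0,N)| \;=\; 2^N \int x^N\, d\pi_{\underline\varepsilon/2}(x).
\]

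To handle the left side, I would decompose $D_s(0,N)$ according to the underlying (uncolored) noncrossing partition $\pi \in NC(N)$. For each block $B$ of size $k$, the admissible colorings are those with $b$ black and $w$ white legs satisfying $b \equiv w \pmod s$; since positions matter, this gives $C_k := \sum_{j:\, 2j \equiv k \,(\mathrm{mod}\,s)} \binom{k}{j}$ colorings. The blockwise color-swap action is free (swapping the coloring of a block changes its set of black positions to its complement, which cannot equal itself), so the number of orbits is exactly $C_k/2$. Since the swaps in different blocks act independently and freely, this yields the clean multiplicative formula
\[
|D_s(0,N)| \;=\; \sum_{\pi \in NC(N)} \prod_{B \in \pi} \frac{C_{|B|}}{2}.
\]

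For the right side, I would use Lemma 4.3 (or equivalently Theorem \ref{PoissonR}), which identifies the free cumulants of a compound free Poisson as $\kappa_n(\pi_\rho) = \int z^n\, d\rho(z)$. Writing $\underline\varepsilon$ as the pushforward of the uniform measure on $s$-th roots of unity under $z \mapsto \mathrm{Re}(z)$, using $\cos(2\pi k/s) = (\omega^k + \omega^{-k})/2$ with $\omega = e^{2\pi i/s}$, expanding by the binomial theorem and invoking orthogonality $\sum_{k=0}^{s-1}\omega^{k(2j-n)} = s\cdot \mathbf 1_{s \mid (n-2j)}$, I get
\[
\kappa_n(\pi_{\underline\varepsilon/2}) \;=\; \tfrac{1}{2}\!\int z^n\, d\underline\varepsilon(z) \;=\; \frac{C_n}{2^{n+1}}.
\]
Then Speicher's moment-cumulant formula gives $m_N(\pi_{\underline\varepsilon/2}) = \sum_{\pi \in NC(N)} \prod_{B \in \pi} C_{|B|}/2^{|B|+1}$, so multiplying by $2^N = \prod_B 2^{|B|}$ produces exactly the sum above for $|D_s(0,N)|$.

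The main (modest) obstacle I expect is the color-swap bookkeeping in the first step: one must check that the blockwise $\mathbb{Z}_2$-action on colorings is genuinely free for every block of positive size, so that the orbit count is $C_{|B|}/2$ without any Burnside correction, and that the interpretation of $\underline\varepsilon/2$ as the rescaled measure $\frac12\underline\varepsilon$ (rather than a dilation) is the one compatible with the conventions of Section 4. Once these are in place, the two sums over $NC(N)$ match termwise and uniqueness of moments for compactly supported distributions finishes the proof.
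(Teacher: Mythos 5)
Your proposal is correct and follows essentially the same route as the paper: moments of $\chi$ are identified with the counts $|D_s(0,N)|$ via Theorem 3.5, the blockwise coloring count $C_k/2$ is identified with the free cumulants through Speicher's moment--cumulant formula, the roots-of-unity orthogonality turns these into the moments of $\tfrac12\underline{\varepsilon}$, and Lemma 4.3 identifies the result as the compound free Poisson law (your reading of $\underline{\varepsilon}/2$ as mass-halving is indeed the paper's convention). The only difference is bookkeeping: you rescale by $2^N=\prod_B 2^{|B|}$ and match cumulants directly, whereas the paper compares $R$-transforms using the dilation formula $R_{a\chi}(y)=aR_\chi(ay)$.
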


\begin{proof}
According to the general theory developed by Woronowicz in \cite{wo1}, the moments of the character $\chi=\Sigma u_{ii}$ are given by:
$$\int\chi^r=\dim(Fix(u^{\otimes r}))$$

So, we have to count the diagrams in Theorem 3.5. If we denote by $\kappa_r$ the number of ways of coloring an
$r$-block, then the number of diagrams in $D_s(0,r)$ is:
$$m_r=\sum_{\pi\in NC(r)}\prod_{b\in\pi}\kappa_{\# b}$$

According now to Speicher's results in \cite{sp1} (see Proposition 11.4 (3) in \cite{nsp}), we can conclude that the free cumulants of the measure
under consideration are the above numbers $\kappa_r$.

We will write $p=q(s)$ to indicate that $p=q$ modulo $s$. Since for coloring a $r$-block, as in Definition 3.1,
what we have to do is just to pick two numbers $p,q$ satisfying $p+q=r$ and $p=q(s)$, then choose $p$ legs of our
block and color them black, and color the remaining $q$ legs white, remembering that the colors can be also
exchanged, the numbers $\kappa_r$ are given by:
$$\kappa_r=\frac{1}{2}\sum_{p+q=r,\ p=q(s)}\binom{r}{p}$$

Consider now the root of unity $w=e^{2\pi i/s}$. Our claim is that we have:
$$\kappa_r=\frac{1}{2s}\sum_{k=1}^s(w^k+w^{-k})^r$$

Indeed, the equality between the above two quantities can be checked as follows:
\begin{eqnarray*}
\frac{1}{s}\sum_{k=1}^s(w^k+w^{-k})^r
&=&\frac{1}{s}\sum_{k=1}^s\sum_{p=0}^r\binom{r}{p}(w^k)^p(w^{-k})^{r-p}\\
&=&\sum_{p=0}^r\binom{r}{p}\left(\frac{1}{s}\sum_{k=1}^s(w^{2p-r})^k\right)\\
&=&\sum_{p+q=r,\,p=q(s)}\binom{r}{p}
\end{eqnarray*}

We use now the fact, once again originally proved in  Speicher's paper \cite{sp1}, that the free cumulants are the coefficients of the $R$-transform (actually in \cite{nsp} the $R$-transform is first introduced as the power series with the coefficients given by the free cumulants and only later related to the functional equations).  So, by using the above formula, we conclude that the $R$-transform $R_\chi(y)=\kappa_1+\kappa_2y+\kappa_3y^2+\ldots$ of our measure is given by:
$$R_\chi(y)=\frac{1}{2s}\sum_{k=1}^s\frac{w^k+w^{-k}}{1-(w^k+w^{-k})y}$$

Now by using the well-known dilation formula $R_{a\chi}(y)=aR_\chi(ay)$, we obtain:
$$R_{\chi/2}(y)=\frac{1}{4s}\sum_{k=1}^s\frac{w^k+w^{-k}}{1-(w^k+w^{-k})y/2}$$

Consider now the uniform measure on the $s$-th roots of unity, $\varepsilon=\frac{1}{s}\sum_{k=1}^s\delta_{w^k}$.
Its image via the projection map from complex to real measures, $\delta_z\to\delta_{Re(z)}$, is given by:
$$\underline{\varepsilon}=\frac{1}{s}\sum_{k=1}^s\delta_{(w^k+w^{-k})/2}$$

Thus the real measure appearing in the statement is:
$$\underline{\varepsilon}/2=\frac{1}{2s}\sum_{k=1}^s\delta_{(w^k+w^{-k})/2}$$

By using now Lemma 4.3, we have:
$$R_{\pi_{\underline{\varepsilon}}/2}(y)=\frac{1}{4s}\sum_{k=1}^s\frac{w^k+w^{-k}}{1-(w^k+w^{-k})y/2}$$

Thus we have $R_{\chi/2}(y)=R_{\pi_{\underline{\varepsilon}}/2}(y)$, and this gives the conclusion in the statement.
\end{proof}

We can reformulate the above statement in the following way.

\begin{proposition}
For $\Gamma=\mathbb Z_s^{*n}$ with $5\leq s<\infty$ and $n\geq 2$, the main character of the quantum isometry group $G^+(\widehat{\Gamma})$ decomposes as
$$\chi=\sum_{k=1}^s2\cos\left(\frac{2k\pi}{s}\right)\alpha_k$$
where $\alpha_1,\ldots,\alpha_s$ are free Poisson variables of parameter $1/(2s)$, free.
\end{proposition}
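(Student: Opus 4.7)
The proposition is essentially a direct translation of Theorem 5.2 through Theorem \ref{PoissonR}. My plan is to first unwind the measure $\underline{\varepsilon}/2$ explicitly, then apply the Poisson decomposition formula. There is no real combinatorial work left; all of the hard part (diagram counting, identification of the $R$-transform) was done in the proofs of Theorems 3.5 and 5.2.

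First I would recall from the proof of Theorem 5.2 that with $w = e^{2\pi i/s}$, the image of the uniform measure $\varepsilon = \frac{1}{s}\sum_{k=1}^s \delta_{w^k}$ under the projection $\delta_z \mapsto \delta_{\operatorname{Re}(z)}$ is $\underline{\varepsilon} = \frac{1}{s}\sum_{k=1}^s \delta_{(w^k+w^{-k})/2}$. Since $(w^k + w^{-k})/2 = \cos(2k\pi/s)$, rescaling by $1/2$ gives
$$\underline{\varepsilon}/2 = \frac{1}{2s}\sum_{k=1}^s \delta_{\cos(2k\pi/s)},$$
which is a real, compactly supported, discrete positive measure of the exact form $\sum_{k=1}^s c_k \delta_{z_k}$ required by Theorem \ref{PoissonR}, with $c_k = 1/(2s)$ and $z_k = \cos(2k\pi/s)$.

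Next I would apply Theorem \ref{PoissonR} to $\rho = \underline{\varepsilon}/2$. This immediately gives
$$\pi_{\underline{\varepsilon}/2} = \operatorname{law}\left(\sum_{k=1}^s \cos(2k\pi/s)\,\alpha_k\right),$$
where the $\alpha_k$ are free Poisson variables of parameter $1/(2s)$, free among themselves. Combining with Theorem 5.2, which asserts $\operatorname{law}(\chi/2) = \pi_{\underline{\varepsilon}/2}$, yields $\chi/2 = \sum_{k=1}^s \cos(2k\pi/s)\,\alpha_k$ in distribution, and multiplying by $2$ produces the formula claimed in the statement.

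The ``main obstacle'' here is really a matter of clean bookkeeping rather than mathematics: one must keep track of the factor $1/2$ (noting that Theorem 5.2 describes the law of $\chi/2$, not of $\chi$) and verify that the parameter $c_k$ that appears after the rescaling is indeed $1/(2s)$ rather than $1/s$. The decomposition should be understood as an equality in distribution, realised in any free probability space large enough to carry $s$ free copies of a free Poisson$(1/(2s))$ variable; no actual operator-level splitting of $\chi$ inside $C(G^+(\widehat{\Gamma}))$ is asserted.
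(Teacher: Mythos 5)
Your argument is correct and coincides with the paper's own proof: both identify $\underline{\varepsilon}/2=\frac{1}{2s}\sum_{k=1}^s\delta_{\cos(2k\pi/s)}$, apply Theorem 4.4 to write $\pi_{\underline{\varepsilon}/2}$ as the law of $\sum_{k=1}^s\cos(2k\pi/s)\,\alpha_k$ with the $\alpha_k$ free Poisson of parameter $1/(2s)$ and free, and then use Theorem 5.2 together with a factor $2$ rescaling, the conclusion being an equality in distribution. Nothing is missing.
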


\begin{proof}
This follows from Theorem 4.4 and from Theorem 5.2. Indeed, if $\alpha_1,\ldots,\alpha_k$ are free Poisson variables as in the statement, we have:
$$law(\chi/2)=\pi_{\underline{\varepsilon}/2}=\pi_{\left\{\frac{1}{2s}\sum_{k=1}^s\delta_{(w^k+w^{-k})/2}\right\}}=law\left(\sum_{k=1}^s\frac{w^k+w^{-k}}{2}\,\alpha_k\right)$$

Thus $\chi$ has the same law as twice the variable on the right, and we are done.
\end{proof}

As a first observation, the above results remind those previously found in \cite{bb+}, \cite{ez1}, where the free
Poisson law, and its compound versions, also play a central role.

More precisely, for the quantum groups $H_n^s,H_n^{s+},H_n^{(s)}$ considered in \cite{bb+}, \cite{ez1}, the
corresponding asymptotic spectral measures are $p_\varepsilon$, $\pi_\varepsilon$, $\tilde{p_\varepsilon}$
respectively, where $\varepsilon\to\tilde{\varepsilon}$ is the ``squeezing'' operation $\delta_z\to\delta_{|z|}$,
from the complex to the real probability measures.

So, the quantum group $G^+(\widehat{\mathbb Z_s^{*n}})$, while being not exactly ``easy'' in the sense of
\cite{bb+}, \cite{ez1}, seems to belong to the same circle of ideas as the general ``hyperoctahedral series'' of
easy quantum groups, whose existence was conjectured in \cite{ez1}. This suggests that the hyperoctahedral
series, or at least a big part of it, might appear via some simple algebraic manipulations from the quantum
isometry groups of type $G^+(\widehat{\Gamma})$, with $\Gamma$ of the form $\mathbb Z_{s_1}*\ldots*\mathbb
Z_{s_n}$, or perhaps a bit more general. We do not know if it is so.

\section{Special cases}

In this section we discuss the special cases $s=2,3,4,\infty$. We begin with some results at $s=2,3,4$, refining and generalizing those found in \cite{bhs}, at $n=1$.

\begin{proposition}
$G^+(\widehat{\mathbb Z_2^{*n}})$ is the hyperoctahedral quantum group $H_n^+$ from \cite{bbc}.
\end{proposition}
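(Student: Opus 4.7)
The plan is to specialize Theorem 1.5 to $\Gamma = \mathbb{Z}_2^{*n}$ and show that the universal algebra presented by the relations there is exactly $C(H_n^+)$, using the definition of $H_n^+$ from \cite{bbc} via orthogonal matrices satisfying the cubic relations $u_{ij}u_{kj} = u_{ij}u_{ik} = 0$ for $j \neq k$ (together with $uu^t = u^t u = I$ and $u_{ij}^* = u_{ij}$).

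First, I would set up notation carefully for the $s = 2$ case. Since $g_i = g_i^{-1}$ in $\mathbb{Z}_2^{*n}$, the minimal generating set has $n$ elements (not $2n$ as in the general setup of Section 2), so the fundamental corepresentation is an $n \times n$ matrix $u$ with $\alpha(g_i) = \sum_j g_j \otimes u_{ji}$. The condition that $\alpha$ preserves $*$, applied to the self-adjoint elements $g_i$, immediately yields $u_{ji}^* = u_{ji}$. Combined with the unitarity of $u$ and $u^t$ inherited from $U_n^+$, this means $u$ is orthogonal in the sense of \cite{wa1}.

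Second, I would translate the algebraic relation $g_i^2 = 1$ by expanding
\begin{equation*}
\alpha(g_i)^2 = \sum_j g_j^2 \otimes u_{ji}^2 + \sum_{j \neq k} g_j g_k \otimes u_{ji} u_{ki} = 1 \otimes \sum_j u_{ji}^2 + \sum_{j \neq k} g_j g_k \otimes u_{ji} u_{ki}.
\end{equation*}
The elements $\{g_j g_k : j \neq k\} \cup \{1\}$ are linearly independent in $\mathbb{C}\mathbb{Z}_2^{*n}$ (these are the reduced words of length $\le 2$ with even count), so the equality $\alpha(g_i)^2 = 1 \otimes 1$ forces the column cubic relation $u_{ji} u_{ki} = 0$ for $j \neq k$ and the normalization $\sum_j u_{ji}^2 = 1$, the latter being already implied by orthogonality. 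Applying the antipode $S(u_{ij}) = u_{ji}^* = u_{ji}$ (which is anti-multiplicative) to $u_{ji} u_{ki} = 0$ yields $u_{ik} u_{ij} = 0$, i.e.\ the symmetric row cubic relation. At this point the algebra generated by the $u_{ij}$'s satisfies precisely the defining relations of $H_n^+$.

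Third, I would verify that no additional constraint arises from the invariance of $\operatorname{span}(\Gamma_r)$ for $r \geq 2$ (item (2) in Theorem 1.5). For $r = 2$, the only new content is that the coefficient of $1$ in $\alpha(g_i g_j) = \sum_{k,l} g_k g_l \otimes u_{ki} u_{lj}$ vanishes when $i \neq j$; this is $\sum_k u_{ki} u_{kj} = 0$, which is a column of $u^t u = I$. For $r \geq 3$ the same pattern persists: since the only nontrivial relations in $\mathbb{Z}_2^{*n}$ are $g_i^2 = 1$, a straightforward induction, identical to the argument in the proof of Theorem 5.1 of \cite{bhs}, shows that the eigenspace invariance is a consequence of orthogonality together with the cubic relations already derived. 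Hence $C(G^+(\widehat{\mathbb{Z}_2^{*n}})) = C(H_n^+)$.

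The main potential obstacle is the bookkeeping in the last step—ruling out new relations at all eigenvalue levels $r \geq 2$—but this is handled uniformly by the same mechanism used in \cite{bhs}, so no genuinely new combinatorial work is required.
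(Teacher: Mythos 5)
Your proposal is correct and follows essentially the same route as the paper: specialize Theorem 1.5 to $\mathbb Z_2^{*n}$, derive $u_{ij}=u_{ij}^*$ and the cubic relations from $g_i=g_i^*$ and $g_i^2=1$, and identify the resulting presentation with that of $H_n^+$ from \cite{bbc}. Your extra steps (obtaining the row relations $u_{ik}u_{ij}=0$ via the antipode, and checking that the eigenspace-invariance conditions of Theorem 1.5(2) impose nothing new) merely make explicit details the paper treats implicitly, via the partial-isometry observation and the remark following Theorem 1.5.
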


\begin{proof}
Let $g_1,\ldots,g_n$ be the standard generators of $\mathbb Z_2^{*n}$. We write the coaction of the quantum isometry group in the statement as $\alpha(g_i)=\Sigma_jg_j\otimes u_{ji}$. Then, according to the general results in section 1, the relations between the generators $u_{ij}$ are as follows:
\begin{enumerate}
\item First, we have the relations $u_{ij}=u_{ij}^*$ (coming from $g_i=g_i^*$).

\item Second, we have $u_{ji}u_{ki}=0$ for $j\neq k$, and $\Sigma_ju_{ji}^2=1$ (coming from $g_i^2=1$).
\end{enumerate}

From (2) we get $u_{ki}=\sum_ju_{ji}^2u_{ki}=u_{ki}^3$, and together with (1), this shows that the elements $u_{ij}$ are partial isometries. With this observation in hand, the above relations are simply the ``cubic'' relations in \cite{bbc}, for partial isometries, and this gives the result.
\end{proof}

We refer to \cite{bbc} for a full discussion of the representation theory invariants of $H_n^+$, and to
\cite{bb+}, \cite{ez2} for some technical versions and generalizations of these computations.

\begin{proposition}
Theorems 2.5, 3.5, 5.2 hold as well at $s=3$, and give the presentation, diagrams, and spectral measure for the quantum group $G^+(\widehat{\mathbb Z_3^{*n}})$.
\end{proposition}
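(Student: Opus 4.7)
My plan is to verify that the proofs of Theorems 2.5, 3.5, and 5.2 go through essentially verbatim at $s=3$. The hypothesis $s\geq 5$ entered each of those proofs only at a handful of explicit moments, and in each case the argument actually required only $s\notin\{1,2,4\}$, which covers $s=3$ as well.

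For Theorem 2.5, the combinatorial input used in Lemma 2.4 and in the opening of the proof is that the products $g_{jb}g_{k\bar c}$ are distinct and nontrivial whenever $(jb)\neq(kc)$. I would check this at $s=3$ case by case: when $j=k$ (forcing $b\neq c$), the product collapses to $g_j^{\pm 2}$, and the two signs give different nontrivial elements of $\mathbb Z_3$ since $2\not\equiv -2\pmod 3$; when $j\neq k$ the product is already a reduced word of length $2$ in the free product, manifestly distinct from any other such word. With this in hand the expansion of $\alpha(g_{ia})^3$ over reduced words $W_3$, and the equivalence $(1)\iff(2)$ in the proof of Theorem 2.5, apply unchanged, giving the presentation $\xi\in Fix(u^{\otimes 3})$ with $\xi=\sum_{ia} e_{ia}^{\otimes 3}$.

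For Theorem 3.5 I would re-run Steps 1--5 with $s=3$. Step 1's reduction to proving $D_3=\langle\overline D_\infty, 1_3\rangle$ rests only on Theorem 2.5, now established at $s=3$. Steps 2 and 3 (the fork/comb construction and the recursive proof of $D_\infty\subset D_s'$) are purely diagrammatic and work for any $s\geq 2$. Step 4's lifting of a one-block partition $\pi\in D_s$ to a one-block partition $\pi'\in D_\infty$, by appending extra black legs and capping them with copies of $1_s$, is likewise $s$-independent. Step 5's linear independence uses only the inclusion $D_s\subset D_1$ together with the dimension formula for $H_n^+$ recalled from \cite{bbc,bsk}, which is insensitive to the value of $s$.

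Finally, for Theorem 5.2 the free cumulant computation $\kappa_r=\tfrac{1}{2s}\sum_{k=1}^s(w^k+w^{-k})^r$, obtained via character orthogonality on $\mathbb Z_s$, is valid for every $s\geq 2$, so the $R$-transform identification $R_{\chi/2}=R_{\pi_{\underline\varepsilon/2}}$ at $s=3$ follows from the same chain of equalities, now with $w=e^{2\pi i/3}$. The likeliest obstacle to this plan is Step 4 of Theorem 3.5: for $s=3$ the lifting pictures are much smaller than in the $s=5$ example drawn earlier, so I would verify one or two explicit small cases (say $\pi\in NC(0,3)$ and $\pi\in NC(0,5)$) to confirm that no degenerate situation arises. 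Past that, the proposition is essentially a consolidation of the parenthetical remarks already made at the start of each original proof.
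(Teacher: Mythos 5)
Your proposal is correct and follows essentially the same route as the paper: the paper's own proof is exactly the observation that the arguments for Theorems 2.5, 3.5 and 5.2 only use $s\neq 1,2,4$ (no length-two coincidences among the products $g_{jb}g_{k\bar c}$, which you verify explicitly at $s=3$), so they hold verbatim at $s=3$; indeed the proofs of Lemma 2.4 and Theorem 3.5 already remark this. Your additional checks of Steps 2--5 of Theorem 3.5 and of the cumulant formula at $s=3$ are just the ``careful examination'' the paper invokes.
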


\begin{proof}
This is clear from a careful examination of the proofs of the above results: the beginning of the proof of Theorem 2.5 holds as well at $s=3$, because here we do not have length 2 relations either, and the rest of the proofs simply hold, unchanged.
\end{proof}

At $s=4$ now, the situation is quite complicated, and we currently do not have general results. Let us just state
and prove the following result, slightly improving a result in \cite{bhs}.

\begin{proposition}
We have a $*$-algebra isomorphism $C(G^+(\widehat{\mathbb Z_4}))\simeq C^*(D_\infty\times\mathbb Z_2)$.
\end{proposition}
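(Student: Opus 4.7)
The plan is to extract a concrete presentation of $C(G^+(\widehat{\mathbb Z_4}))$ from Theorem 1.5, then recognize it as a tensor product of familiar group $C^*$-algebras. Write $\alpha(g)=g\otimes a+g^{-1}\otimes b$ with $a=u_{11}$, $b=u_{21}$; then $\alpha(g)^*=\alpha(g^{-1})$ forces $u_{12}=b^*$ and $u_{22}=a^*$, and the unitarity of $u$ and $u^t$ yields that $a,b$ are normal with $aa^*+bb^*=1$, $ab+ba=0$ and $ab^*+b^*a=0$. Using $g^2=g^{-2}$ in $\mathbb Z_4$, the element $v:=a^2+b^2$ appearing in $\alpha(g)^2=g^2\otimes v$ must satisfy $v^*=v$ (comparing $\alpha(g^2)$ with $\alpha(g^{-2})$), and $g^4=1$ gives $v^2=1$. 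Moreover the identity $\alpha(g^{-1})=\alpha(g)\alpha(g^2)$, expanded, yields $a^*=av$ and $b^*=bv$; combined with $v^2=1$ this forces $vav=a$ and $vbv=b$, so $v$ is central.

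Next I would use this central self-adjoint unitary to decompose the algebra via the projections $p=(1+v)/2$, $q=(1-v)/2$. On $pA$, $v=1$ so $a,b$ become self-adjoint with $a^2+b^2=1$ and $ab+ba=0$; on $qA$, $v=-1$ so $a,b$ are skew-adjoint, and writing $A=-ia$, $B=-ib$ the same relations $A^2+B^2=1$, $AB+BA=0$ reappear between self-adjoints. Thus both summands are copies of the universal $C^*$-algebra $\mathcal B$ generated by self-adjoints $X,Y$ satisfying $X^2+Y^2=1$ and $XY+YX=0$, so $C(G^+(\widehat{\mathbb Z_4}))\simeq\mathcal B\oplus\mathcal B\simeq\mathcal B\otimes C^*(\mathbb Z_2)$.

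The key identification is then $\mathcal B\simeq C^*(D_\infty)=C^*(\mathbb Z_2*\mathbb Z_2)$, the universal $C^*$-algebra generated by two self-adjoint unitaries $s,t$. I would exhibit mutually inverse $*$-homomorphisms via the change of variables $s=X+Y$, $t=X-Y$ (inverted by $X=(s+t)/2$, $Y=(s-t)/2$). Indeed $(X\pm Y)^2=X^2+Y^2\pm(XY+YX)=1$ shows $X\pm Y$ are self-adjoint unitaries, giving one direction by the universal property of $C^*(D_\infty)$; conversely $(s+t)^2+(s-t)^2=2(s^2+t^2)=4$ and $(s+t)(s-t)+(s-t)(s+t)=2(s^2-t^2)=0$ show that $(s+t)/2$ and $(s-t)/2$ satisfy the $\mathcal B$-relations, giving the other direction. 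These maps are manifestly mutually inverse on generators.

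Assembling, $C(G^+(\widehat{\mathbb Z_4}))\simeq\mathcal B\otimes C^*(\mathbb Z_2)\simeq C^*(D_\infty)\otimes C^*(\mathbb Z_2)\simeq C^*(D_\infty\times\mathbb Z_2)$. The main obstacle is the bookkeeping in the first paragraph: one must use the peculiar relation $g^2=g^{-2}$ (which only holds at $s=4$) to derive both the centrality of $v$ and the identities $a^*=av$, $b^*=bv$ that ultimately collapse the presentation into the clean anticommutation algebra $\mathcal B$ paired with a central $\mathbb Z_2$.
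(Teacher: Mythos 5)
Your proof is correct and follows essentially the same route as the paper's: you isolate the central self-adjoint unitary $v=a^2+b^2$ (the paper's $T$, whose defining relations it quotes from \cite{bhs} rather than rederiving), split the algebra along the corresponding central projections, and pass to the generators $a\pm b$ to recognize two copies of $C^*(D_\infty)$, hence $C^*(D_\infty\times\mathbb Z_2)$. The only differences are cosmetic: you derive the relations directly from Theorem 1.5 and treat the $v=-1$ summand and the universality of each corner a bit more explicitly than the paper does.
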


\begin{proof}
We use the notations in \cite{bhs}. According to the results there, the fundamental corepresentation of our quantum isometry group must be of the following type:
$$u=\begin{pmatrix}A&B\\ B^*&A^*\end{pmatrix}$$

The generators must satisfy the defining relations for $SU_2^{-1}$, namely:
$$AA^*+BB^*=1,\quad AB+BA=0,\quad AB^*+B^*A=0$$

In addition, the operator $T=A^2+B^2$ must satisfy the following relations:
$$T=T^*,\quad T^2=1,\quad TA=A^*,\quad TB=B^*$$

See \cite{bhs}. The point now is that, since $T$ commutes with $A,B,A^*,B^*$, we can write:
$$T=\begin{pmatrix}1&0\\ 0&-1\end{pmatrix}$$

Thus the quantum isometry algebra in the statement decomposes naturally as a direct sum of two $*$-algebras, and by passing to the generators $A+B,A-B$, we see that each of these $*$-algebras is isomorphic to the group algebra $C^*(D_\infty)$. This gives the result.
\end{proof}

The above result suggests that $G^+(\widehat{\mathbb Z_4})$ might be a twist of the dual of
$D_\infty\times\mathbb Z_2$. This is probably true, but unfortunately we do not see an analogue of this result,
for $n>1$.

Let us discuss now the case $s=\infty$. We have two statements here.

\begin{proposition}
$G^+(\widehat{F}_n)$ is the quantum group $H_{n0}^+$ from \cite{bsk}.
\end{proposition}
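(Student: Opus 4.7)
The plan is to observe that this statement is essentially a restatement of Proposition 2.2, combined with the fact that $F_n = \mathbb Z^{*n} = \mathbb Z_\infty^{*n}$ fits the general framework of Section 2 in the degenerate case $s=\infty$. First I would recall from Theorem 1.5 that $G^+(\widehat{\Gamma})$ is the subgroup of $U_m^+$ cut out by (1) the relations making $\alpha(g_{ia})=\sum_{jb}g_{jb}\otimes u_{jb,ia}$ a $*$-algebra morphism, and (2) the relations making each $\mathrm{span}(\Gamma_r)$ invariant.

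Next I would run exactly the computation performed at the start of the proof of Lemma 2.4. For $\Gamma=F_n$, condition (1) breaks into two kinds of relations: those coming from $g_{ia}^*=g_{i\bar a}$ (which give $u_{ia,jb}^*=u_{i\bar a,j\bar b}$), and those coming from $g_{ia}g_{i\bar a}=1$. Because $s=\infty$, there are no extra torsion relations of the form $g^s=1$ to impose; this is the "convention that the extra relations do not exist in this case" mentioned right after Lemma 2.4. Expanding $\alpha(g_{ia})\alpha(g_{i\bar a})=1\otimes 1$ and using that the words $g_{jb}g_{k\bar c}$ with $(jb)\neq(kc)$ are all distinct and nontrivial in $F_n$ (since $F_n$ has no nontrivial short relations), one gets, as in the proof of Proposition 2.2, the partial isometry condition together with $\sum_{jb}u_{jb,ia}u_{jb,ia}^*=1$ and $u_{jb,ia}u_{kc,ia}^*=0$ for $(jb)\neq(kc)$. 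These are exactly the defining relations of $H_{n0}^+$ given in Proposition 2.2 (equivalently in the reformulation of Proposition 2.3).

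Finally I would handle the second family of relations, those in (2) of Theorem 1.5. The argument is the same one used at the end of the proof of Lemma 2.4 and referred back to Theorem 5.1 of \cite{bhs}: once the generators $u_{ia,jb}$ satisfy the $H_{n0}^+$ relations, the invariance of $\mathrm{span}(\Gamma_r)$ for all $r\geq 2$ follows automatically, because the only combinatorial obstruction to invariance comes from potential collapses of reduced words, and in $F_n$ no such collapses occur. Thus (2) is redundant in the $s=\infty$ case and the universal $C^*$-algebra obtained is precisely $C(H_{n0}^+)$.

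The main (and essentially only) obstacle is making sure that at $s=\infty$ no extra relations beyond those of Proposition 2.2 appear, i.e.\ verifying that the torsion-freeness of $F_n$ exactly kills any candidate new relations. This is a parity/length check entirely analogous to the one carried out for finite $s\geq 5$ in Lemma 2.4, only simpler because the list of identifications $g_{j_1b_1}\cdots g_{j_sb_s}=g_{k_1c_1}\cdots g_{k_sc_s}$ in $F_n$ is empty beyond the trivial cancellations $g_{ia}g_{i\bar a}=1$ already accounted for.
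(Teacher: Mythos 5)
Your proposal is correct and follows essentially the same route as the paper: the paper's own ``proof'' of this proposition is simply a reference to \cite{bsk}, the content of which is exactly the derivation you spell out, already given in the paper as Proposition 2.2 (the $*$-morphism relations yield the partial isometry and orthogonality conditions, and the invariance relations of Theorem 1.5\,(2) are automatic, as in the proof of Theorem 5.1 of \cite{bhs}). So you have merely unpacked the citation rather than found a different argument.
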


\begin{proof}
This result, and the precise relation between $H_{n0}^+$ and $H_n^+$, are explained in \cite{bsk}.
\end{proof}

Our second statement concerns the ``representation-theoretic'' limit, with $s\to\infty$, of the quantum groups $G^+(\widehat{\mathbb Z_s^{*n}})$. Quite surprisingly, this limiting quantum group is not the quantum group $H_{n0}^+$ from \cite{bsk}. Its main properties can be summarized as follows.

\begin{theorem}
Let $K_n$ be the quantum subgroup of $H_{n0}^+$ presented by the relations making all the standard generators $u_{ij}$ normal. Then:
\begin{enumerate}
\item $Hom(u^{\otimes k},u^{\otimes l})=span(T_\pi|\pi\in D_\infty(k,l))$, for any $k,l\in\mathbb N$.

\item $law(\chi/2)=\pi_{\underline{\varepsilon}/2}$, where $\varepsilon$ is the uniform measure on the unit circle.
\end{enumerate}
\end{theorem}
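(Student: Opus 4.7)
The strategy is to handle part (1) via Woronowicz's Tannakian duality, by identifying one specific partition $\pi_0\in D_\infty(2,2)\setminus\overline{D}_\infty(2,2)$ whose intertwiner condition encodes precisely the normality of the generators, and then showing that $\overline{D}_\infty$ together with $\pi_0$ generates all of $D_\infty$. Part (2) will then follow by a free cumulant computation paralleling that of Theorem 5.2.

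\textbf{Part (1).} I would take $\pi_0$ to be the one-block partition in $NC(2,2)$ with top legs colored $b,w$ and bottom legs colored $w,b$. One checks directly from Definition 3.4 that $\pi_0\notin\overline{D}_\infty$ (the top and bottom follow opposite alternation patterns), while the signed-balance condition of Proposition 3.2(3) places it in $D_\infty$. Its associated map satisfies $T_{\pi_0}(e_{ia}\otimes e_{i\bar a})=e_{i\bar a}\otimes e_{ia}$ and vanishes on every other basis vector. A direct matrix computation in $H_{n0}^+$ shows that the condition $T_{\pi_0}\in\mathrm{Hom}(u^{\otimes 2},u^{\otimes 2})$, after using the row and column orthogonality of Proposition 2.3 to eliminate the ``off-diagonal'' equations, reduces precisely to $u_{ij}u_{ij}^*=u_{ij}^*u_{ij}$ for every $i,j$. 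Hence by Woronowicz's Tannakian duality $K_n$ corresponds to the tensor category $\mathcal{C}=\langle\overline{D}_\infty,\pi_0\rangle$, and the inclusion $\mathcal{C}\subseteq D_\infty$ is immediate. For the reverse $D_\infty\subseteq\mathcal{C}$, by Frobenius duality (rotation of legs) it is enough to realise, for each $m$, every admissible coloring of the one-block partition in $D_\infty(0,2m)$ (an $m$-black, $m$-white balanced block) as an element of $\mathcal{C}$. The plan is to induct on $m$: the fully-alternating coloring lies in $\overline{D}_\infty$ as the base case, and suitable rotations and tensor products of $\pi_0$ allow one to exchange two adjacent legs of opposite colors, reducing any balanced coloring to the alternating one by a sequence of adjacent transpositions, in direct parallel with Step 3 of the proof of Theorem 3.5. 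Linear independence of $\{T_\pi:\pi\in D_\infty\}$ is inherited from the $D_1$-independence established in Step 5 of that same proof.

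\textbf{Part (2).} Given (1), the moments are $\int\chi^r=\dim\mathrm{Fix}(u^{\otimes r})=|D_\infty(0,r)|$, and Speicher's moment-cumulant formula (Proposition 11.4 of \cite{nsp}) identifies the free cumulants $\kappa_r$ of $\chi$ with the number of admissible colorings of a single $r$-block in $D_\infty$. The balance condition forces $r=2m$ to be even, and after quotienting by the block color-swap one obtains $\kappa_{2m+1}=0$ and $\kappa_{2m}=\binom{2m}{m}/2$. Summing the series yields $R_\chi(y)=(2y)^{-1}((1-4y^2)^{-1/2}-1)$ and hence $R_{\chi/2}(y)=(2y)^{-1}((1-y^2)^{-1/2}-1)$. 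On the other side, the continuous version of Lemma 4.3, $R_{\pi_\rho}(y)=\int z(1-yz)^{-1}\,d\rho(z)$, applied to $\rho=\underline{\varepsilon}/2$ with density $(2\pi\sqrt{1-x^2})^{-1}$ on $(-1,1)$, gives the same expression after the substitution $x=\cos\theta$ and the standard integral $\int_0^\pi d\theta/(1-y\cos\theta)=\pi(1-y^2)^{-1/2}$. Matching $R$-transforms produces $\mathrm{law}(\chi/2)=\pi_{\underline{\varepsilon}/2}$. The main technical hurdle throughout is the generation claim $\langle\overline{D}_\infty,\pi_0\rangle=D_\infty$, which requires a careful combinatorial induction analogous to Steps 2--4 of the proof of Theorem 3.5; the rest is a sequence of direct (if somewhat tedious) calculations.
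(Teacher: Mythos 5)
Your proposal is correct and follows essentially the same route as the paper: the paper adjoins the one-block $(0,4)$ partition colored $bbww$ and immediately derives your swap partition $\sigma=\pi_0$ from it by Frobenius rotation, then uses exactly your adjacent-transposition argument to get $D_\infty=\langle\overline{D}_\infty,\pi_0\rangle$ and shows the corresponding fixed-vector relation is equivalent to normality of the $u_{ij}$. Your part (2) simply makes explicit (closed-form $R$-transforms, $\kappa_{2m}=\tbinom{2m}{m}/2$, the arcsine density of $\underline{\varepsilon}$) the ``obvious modifications'' of the Theorem 5.2 computation that the paper invokes, and the values match.
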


\begin{proof}
We recall from section 3 above that we have inclusions $\overline{D}_\infty(k,l)\subset D_\infty(k,l)$, for any $k,l$. The idea will be to find first a categorical generation result of type $D_\infty=<\overline{D}_\infty,\pi>$, and then to find the quantum group and probabilistic interpretations of this result.

So, let $\pi\in D_\infty(0,4)$ be the 1-block partition colored $bbww$. Our claim is that we have $D_\infty=<\overline{D}_\infty,\pi>$. Since the inclusion $\supset$ is clear, we just have to prove the inclusion $\subset$.

In order to do this, let us look at the category $D=<\overline{D}_\infty,\pi>$. By using Frobenius duality, i.e. by using rotations, we can deduce from $\pi\in D$ the fact that $D$ contains all the 1-block partitions having 0 upper legs and 4 lower legs, with the 4 lower legs colored half-black, half-white. So, in other words, we have $D(0,4)=D_\infty(0,4)$.

Now by using once again Frobenius duality, i.e. by using rotations, we deduce that we have $D(2,2)=D_\infty(2,2)$. In particular, if $\sigma\in D_\infty(2,2)$ denotes the 1-block partition with the upper legs colored $bw$ and the lower legs colored $wb$, then $\sigma\in D(2,2)$.

With this observation in hand, we can finish the proof of the above claim: indeed, by using $\sigma$ we can exchange the position of two consecutive black and white colors in any partition in $D$, and it is easy to see that this operation allows one to construct any element in $D_\infty$ starting from the elements of $\overline{D}_\infty$. So, we have $D_\infty=<\overline{D}_\infty,\pi>$ as claimed.

(1) Let us try to compute the quantum group $K_n'\subset H_{n0}^+$ having as Hom spaces the linear spaces $span(T_\pi|\pi\in D_\infty(k,l))$. Since we have $D_\infty=<\overline{D}_\infty,\pi>$, this quantum group $K_n'\subset H_{n0}^+$ is simply the one presented by the relations coming from $T_\pi\in Fix(u^{\otimes 4})$. Now, according to the definitions in section 3 above, we have:
$$T_\pi=\sum_{ia}e_{ia}\otimes e_{ia}\otimes e_{i\bar{a}}\otimes e_{i\bar{a}}$$

By using the relations in Proposition 2.3, we see that this vector is fixed by $u^{\otimes 4}$ if and only if the following condition is satisfied, where $p_{ia,jb}=u_{ia,jb}u^*_{ia,jb}$, $q_{ia,jb}=u^*_{ia,jb}u_{ia,jb}$:
$$q_{ia,jb}p_{ia,kc}q_{ia,ld}=
\begin{cases}q_{ia,jb}&{\rm if}\  (jb)=(kc)=(ld)\\
0&{\rm otherwise}
\end{cases}$$

This condition is easily seen to be equivalent to the ``normality'' relations $p_{ia,jb}=q_{ia,jb}$, for any $i,j,a,b$, so the quantum group $K_n'$ that we are currently computing is nothing but the quantum group $K_n$ appearing in the statement, and we are done.

(2) This follows from (1), as in the proof of Theorem 5.2, after of course performing some obvious modifications in the preliminary material in sections 4 and 5.
\end{proof}

One question arising from the above result is that of finding a suitable geometric interpretation of $K_n$. Since this is a subgroup of $H_{n0}^+=G^+(\widehat{F}_n)$, we can expect $K_n$ to consist of the ``quantum isometries'' of $\widehat{F}_n$ preserving not only the length, but also some ``extra structure''.  A careful study here leads to the following informal answer: ``under the action of $K_n$ a word in $F_n=\mathbb{Z} * \cdots * \mathbb{Z}$ should get sent to the words of the same length, with the switches between the generators coming from various copies of $\mathbb Z$ appearing at the same places''. This will be explained in detail in a future paper from the present series.

\section{Concluding remarks}

We have seen in this paper that the representation theory invariants of $G^+(\widehat{\Gamma})$ can be explicitly computed in the case $\Gamma=\mathbb Z_s^{*n}$, by using diagrammatic techniques. The answer that we obtain -- an explicit formula for the spectral measure, as a compound free Poisson law $\pi_{\underline{\varepsilon}}$ -- appears to be quite interesting from the point of view of free probability.

There are several questions arising from the present work. The main one is probably the computation of the invariants for general free products of cyclic groups, $\Gamma=\mathbb Z_{s_1}*\ldots*\mathbb Z_{s_n}$. Here we can definitely expect to have some very interesting, new combinatorics, ultimately coming from the arithmetic properties of the sequence of indices $s_1,\ldots,s_n$.

Understanding this next-step combinatorics looks like an important task towards the general understanding of the quantum isometry groups introduced in \cite{gos}, and of the easy quantum groups introduced in \cite{bsp}. We intend to come back to this fundamental question, at least with  partial results, in some future work.

\end{document}